\newtheorem{theorem}{Theorem}[section]
\newtheorem{prop}[theorem]{Proposition}
\newtheorem{lemma}[theorem]{Lemma}
\newtheorem{corollary}[theorem]{Corollary}
\newtheorem{claim}[theorem]{Claim}
\theoremstyle{definition}
\newtheorem{rem}[theorem]{Remark}
\newcommand{\eps}{\varepsilon}
\newcommand{\Z}{\mathbb{Z}}
\newenvironment{poc}{\begin{proof}[Proof of claim]}{\end{proof}}
\title{An inverse theorem on sets with rich additive structure modulo primes}
\author{Ernie Croot}
\address{School of Mathematics\\ Georgia Institute of Technology\\ Atlanta, GA 30332\\ United States}
\email{ernest.croot@math.gatech.edu}
\author{Junzhe Mao}
\address{School of Mathematics\\ Georgia Institute of Technology\\ Atlanta, GA 30332\\ United States}
\email{jmao87@gatech.edu}
\author{Chi Hoi Yip}
\address{School of Mathematics\\ Georgia Institute of Technology\\ Atlanta, GA 30332\\ United States}
\email{cyip30@gatech.edu}
\subjclass[2020]{Primary 11N35, 11N69; Secondary 11B30, 11P70}
\keywords{inverse problem, larger sieve, arithmetic progression, generalized arithmetic progression}
\begin{document}

\begin{abstract}  In this paper, we prove several results on the structure of maximal sets $S \subseteq [N]$ such that $S$ mod $p$ is contained in a short arithmetic progression, or the union of short progressions, where $p$ ranges over a subset of primes in an interval $[y,2y]$ with $(\log N)^{O(1)} < y \leq N$. We also provide several constructions demonstrating the sharpness of our results. Furthermore, as an application, we provide several improvements on the larger sieve bound for $|S|$ when $S$ mod $p$ has strong additive structure, parallel to the work of Green--Harper and Shao for improvements on the large sieve.
\end{abstract} 

\maketitle

\section{Introduction}

Many problems in analytic number theory boil down to bounding the size of a set (both from above and from below) contained in a certain set of residue classes mod $p$ for various sets of primes $p$; and then sieve methods are the primary tools for doing so.  However, there are also many questions that require solving an {\it inverse} problem.  That is, if we let $S \subseteq [N]$ be a maximal set of integers in this interval where the residue classes mod $p$ occupied by $S$ have some particular pattern for many primes $p$, what can one say about the {\it structure} of the set $S$ beyond just its size? Rather than fully addressing this question, many results in the literature~\cite{green, helfgott, S15, shao, walsh1,walsh2} attempt to establish the following dichotomy:  
\medskip
\begin{center}
Either $|S|$ is {\it much smaller} than the upper bound given by various sieve methods such as the large/larger sieve, or else $S$ has some non-trivial structure, in particular {\it algebraic} structure.
\end{center}
\medskip
While some progress has been made, in most cases, such a dichotomy has not been confirmed; see, for example, the conjectures stated in \cite[Conjecture 1.3]{S15}
and \cite[Conjecture 1.7]{green}.

Before we can discuss results from the literature, as well as state our main theorems, we need to introduce the following notation:  if $A$ is a set of integers, then for an integer $m$, we let $A_m$ denote the image of $A$ under the canonical homomorphism $\varphi: {\mathbb Z} \to {\mathbb Z}_m$, where ${\mathbb Z}_m$ is shorthand for ${\mathbb Z}/m{\mathbb Z}$. Throughout the paper, $p$ denotes a prime. Given an integer $n$, a prime $p$, and a subset $R\subseteq \Z_p$, we write $n \in R \pmod p$ to mean $n \equiv x \pmod p$ for some $x\in R$. 

Green and Harper \cite{green} developed some fairly general results about what one can say about $S\subseteq[N]$ when it occupies less than about $\alpha p$ residue classes mod $p$ for certain sets of primes $p$ and for $0 < \alpha < 1$. For example, they \cite[Theorem 1.3]{green} showed that if $S_p$ is contained in a mod $p$ arithmetic progression $R_p$ of length $(1-\eps)p$ for all $p \leq N^{1/2}$, then the classical large sieve bound $|S|\ll N^{1/2}$ can be improved to $|S|\ll_\eps N^{1/2-\eps'}$, where $\eps'>0$ depends on $\eps$. Shao \cite[Theorem 1.5]{shao} also established a result of the same flavor. Furthermore, they \cite[Theorem 1.2]{green} demonstrated that the bound on $|S|$ can be further significantly improved when $S_p$ is contained in some interval of $\Z_p$ with length $(1-\eps)p$, for each prime $p \geq 2$. 

The Inverse Goldbach Problem \cite{elsholtz, EH, green, ostmann, shao} can also be understood to be a kind of inverse sieve problem, though it takes a bit of work to see how.  Elsholtz \cite{elsholtz}, for instance, managed to reduce the problem to understanding the possible sets $A, B \subseteq [N]$ where $A_p$ and $B_p$ are about size $p/2$ for many of the primes $p < N^{1/2-o(1)}$. Currently, we still have a limited understanding of the structure of such sets $A$ and $B$, and thus the inverse Goldbach Problem remains unsolved. We refer to some partial progress in Green--Harper~\cite[Theorem 1.4]{green}.

Another class of problems where the theorems we prove in this paper are relevant is understanding sets $S \subset [N]$, where, for an integer $0 \leq x \leq p-1$, $p$ prime, the counting function 
$$
C_p(x)\ :=\ \#\bigg\{S' \subseteq S\ :\ \sum_{s \in S'} s \equiv x \pmod{p}\bigg\}
$$
is non-uniform; that is, there exists $x$ such that $C_p(x)$ deviates substantially from $2^{|S|}/p$. Now, if this is to hold for all primes $p \in [y,2y]$ simultaneously, and $|S|>y^{1/2-\delta}$ for some $\delta > 0$, then what kind of structural properties does it force $S$ to have? In Section~\ref{oddsandends}, we will discuss how this essentially implies $S_p$ is ``mostly contained" in an arithmetic progression of length $p^{3/4+\delta}$ (also see \cite{desh, NV} for some similar results) for all primes $p\in [y,2y]$; and, furthermore, in the case of sets $S$ where $C_p(0)=0$ for all primes $p\in [y,2y]$ and $y>(\log N)^2$, we have the stronger conclusion that $S_p$ is ``mostly contained" in an arithmetic progression of size $p^{1/2+\delta+o(1)}$ for most primes $p\in [y,2y]$. Perhaps, one could deduce from the information on $S_p$ that the set $S$ is a long arithmetic progression, or at the very least {\it contains} a long arithmetic progression. Our results in Section~\ref{sec:maintheorem} are not {\it quite} able to determine the structure of $S$ in this case, due to the ``mostly contained" instead of ``completely contained" part; but it indicates a direction that one could go in, using a more general version of our results.  In fact, this is one of the motivations behind Theorem \ref{thm:kAP}, which is indeed a generalization of the type we thought might help.

A specific example in the above setting would be to suppose $S$ is a maximum subset of $\{1,...,N\}$ whose subset sums are not divisible by any prime $p \in [y,2y]$.  This would be the case $C_p(0) = 0$ for each prime $p \in [y,2y]$.  If one takes $S$ to be all integers in $\{d,2d,..., \lfloor(2y)^{1/2}-1\rfloor d\}$, 
where $d$ is not divisible by any of the primes in $[y,2y]$, then $S$ is certainly an arithmetic progression, like our theorems predict; and, furthermore, none of the subset sums of elements of $S$ are divisible by any prime in $[y,2y]$.  Perhaps these are, in fact, the largest sets $S$ for this problem. Note that this example also shows why the above assumption $|S| > y^{1/2-\delta}$ is reasonable.

There are also inverse sieve problems in higher dimensions where, instead of having $S \subseteq [N]$, one has $S \subseteq 
[N]^d$ for some $d \geq 2$.  For example, see the work of Helfgott--Venkatesh \cite{helfgott} and Walsh \cite{walsh1,walsh2}. In particular, Walsh \cite{walsh2} studied sets $S\subseteq [N]^d$ such that $S$ occupies $\ll p^{\kappa}$ residues classes for each prime $p$, where $\kappa$ is a real number with $0\leq \kappa<d$. He showed that a positive density subset of $S$ lies in the zero set of a polynomial with low complexity. 

In this paper, we focus on the same setting as Walsh~\cite{walsh2} for $d=1$ under the additional assumption that $S_p$'s have rich additive structure. In Section~\ref{sec:maintheorem}, we prove some inverse theorems when $S_p$'s are contained in short arithmetic progressions, and then prove results parallel to those of Green--Harper and Shao mentioned above. In Section~\ref{sec:extension}, we consider more general settings.

\subsection{Main results}\label{sec:maintheorem}
In general, unless the sets $S_p$ have a fairly restricted pattern, we might not expect to be able to say much about the structure of the set $S$.  However, in the case where the sets $S_p$ are short arithmetic progressions, we can prove a fairly strong structural result. Our first result is the following theorem, which one can think of as a kind of inverse sieve result:

\begin{theorem}\label{maintheorem}
There is an absolute constant $C>0$ such that the following holds. 
Let $\eps \in (0, 1/2)$.  Suppose $N$ is a positive integer with $N> N_0(\eps)$, and $y$ satisfies
\begin{equation*}
(4C\log N)^{1/2\eps}\ \leq\ y\ \leq\ N.
\end{equation*}
Let ${\mathcal P}$ be a subset of primes 
in $[y,2y]$ satisfying 
$$
|{\mathcal P}|\ \geq\ \frac{C y^{1-2\eps} \log N}{\log y},
$$
For each prime $p \in {\mathcal P}$, choose a prescribed arithmetic progression $R_p \subseteq {\mathbb Z}_p$ with $|R_p| \leq p^{1/2-\eps}+1$. Let $A$ be the set of all integers $n\in \{0,1,\ldots,N\}$ such that $n$ mod $p$ is in $R_p$ for every prime $p \in {\mathcal P}$.  Then, we must have that $A$ is an arithmetic progression of length at most $(2y)^{1/2-\eps}+1$. 
\end{theorem}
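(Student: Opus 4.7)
The plan is to combine a larger-sieve size bound (yielding $|A|\leq(2y)^{1/2-\eps}+1$) with an additive-combinatorial structural argument (yielding that $A$ is an AP). Throughout, I exploit the assumption that $A$ is maximal, i.e.\ that $A=[0,N]\cap\bigcap_{p\in\mathcal{P}}\{n:n\bmod p\in R_p\}$.

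\textit{Step 1 (size bound).} Consider the difference set $D:=A-A\subseteq[-N,N]$. For each $p\in\mathcal{P}$, writing $R_p=b_p+d_p\{0,1,\ldots,|R_p|-1\}$, we have
\[
D\bmod p\,\subseteq\,R_p-R_p\,=\,d_p\cdot\{-(|R_p|-1),\ldots,|R_p|-1\},
\]
an AP in $\mathbb{Z}_p$ of length at most $2|R_p|-1\leq 2p^{1/2-\eps}+1$. I apply Gallagher's larger sieve to $D$ (viewed as a set of integers in an interval of length $2N+1$) with this bound on $|D\bmod p|$. Using the hypothesis $|\mathcal{P}|\log y\geq Cy^{1-2\eps}\log N$ with $C$ taken sufficiently large so that the sieve denominator dominates the interval term $\log(2N+1)$, this yields $|D|\leq 2(2y)^{1/2-\eps}+1$. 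Since $|D|\geq 2|A|-1$, we get the claimed size bound $|A|\leq(2y)^{1/2-\eps}+1$.

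\textit{Step 2 (structural step).} Since $A$ is an AP if and only if $|D|=2|A|-1$, writing $|D|=2|A|-1+r$ with $r\geq 0$, it suffices to show $r=0$. Step 1 forces $r$ to be small when $|A|$ is close to the upper bound, and Freiman's $3k-4$-type theorem for difference sets places $A$ inside an AP $P\subseteq\mathbb{Z}$ of length at most $|A|+r$. The maximality of $A$ then forces $A=P\cap[0,N]$, provided one verifies $P\bmod p\subseteq R_p$ for each $p\in\mathcal{P}$; this verification relies on the fact that the common difference of $P$ must reduce modulo each $p\in\mathcal{P}$ to a multiple of $d_p$ compatible with the length of $R_p$, which in turn follows from the near-tightness of the sieve bound and the fact that $A$, sitting inside $P$, leaves at most $r$ gaps in $P$.

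\textit{Main obstacle.} The delicate point is precisely the alignment between the common difference of $P\subseteq\mathbb{Z}$ (coming from the Freiman step) and the various $d_p\in\mathbb{Z}_p$ (coming from the prescribed $R_p$). A priori these are unrelated, so forcing the alignment requires carefully combining the near-tightness of the larger-sieve bound with the maximality of $A$ (any integer in $P\cap[0,N]$ not already in $A$ would, by maximality, have to fail the congruence $n\bmod p\in R_p$ for at least one $p$, yet the AP structure of $P_p$ and $R_p$ makes this essentially impossible when $|A|$ is near the upper bound). When $|A|$ is substantially smaller than $(2y)^{1/2-\eps}+1$ the sieve bound becomes loose and $r$ could grow, likely necessitating either an induction on $|A|$ or a refined larger-sieve argument applied inside the candidate containing AP $P$.
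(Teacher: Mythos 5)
Your Step~1 is morally sound but not tight: applying Gallagher's larger sieve to $D=A-A$ with $|D_p|\leq 2p^{1/2-\eps}+1$ gives a bound of roughly $(2(2y)^{1/2-\eps}+1)\cdot\frac{\log 2y}{\log y}$, so one loses a $(1+O(1/\log y))$ factor; the clean $|A|\leq(2y)^{1/2-\eps}+1$ stated in the theorem does not fall out directly. This is minor. The serious problem is Step~2. When $|A|$ is much smaller than $(2y)^{1/2-\eps}$, the larger-sieve bound on $|D|$ does not improve, so $r=|D|-(2|A|-1)$ can vastly exceed $|A|$ and the $3k-4$-type theorem gives no containing progression of length $|A|+r$ that is usefully short; Freiman-type theorems only bite when $|D|<3|A|-3$, i.e.\ when $|A|\gtrsim \frac{2}{3}(2y)^{1/2-\eps}$. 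Even in that favorable regime, the crucial alignment step — showing $P\bmod p\subseteq R_p$ for every $p\in\mathcal{P}$, so that maximality forces $A=P\cap[0,N]$ — is asserted rather than proved, and it is not automatic: the $3k-4$ theorem produces \emph{some} AP $P\supseteq A$ of the stated length, not one whose common difference reduces compatibly mod each $p$; a priori $P$ could contain extra integers that happen to violate a single congruence condition without contradicting anything. You correctly flag that an induction on $|A|$ or a refined sieve ``inside $P$'' would be needed, but this is exactly the hard content of the theorem, and the proposal does not supply it.

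The paper takes a different route that sidesteps both issues. It does not use the larger sieve or Freiman's theorem at all for this result. Instead, after translating so that $0\in A$ (reducing to the symmetric Proposition~\ref{mainprop}), it establishes two elementary lemmas. First (Lemma~\ref{lem2}, Corollary~\ref{cor2}): the intersection of $A$ with any integer AP of length $\ll y^{1/2}$ is again an AP — this is a geometry-of-numbers observation about solutions of $je'-md'=z$ with small $|e'|,|d'|$. Second (Lemma~\ref{lem:gcd}): for any $a,b$ that are bounded integer combinations of elements of $A$, the quantity $ta-sb$ is divisible by a common $(s,t)$-pair for $\gg|\mathcal{P}|/y^{1-2\eps}$ primes in $\mathcal{P}$; since $|ta-sb|<N^2$ while the product of those primes exceeds $N^2$, one forces $ta=sb$ exactly, giving $a/\gcd(a,b)\leq O(y^{1/2-\eps})$. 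This ``gcd rigidity'' handles all sizes of $A$ uniformly and directly forces the common difference to synchronize across elements, which is precisely the alignment your proposal cannot pin down. The conclusion then follows by showing that $A$ is covered by finitely many short APs through $0$ and gluing them via the intersection lemma. In short: the paper's argument is a self-contained arithmetic rigidity argument, whereas your proposal imports heavier tools that only engage in the near-extremal regime and leaves the alignment step — the real crux — unproved.
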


We note that there is not much lost here in restricting to primes in a dyadic interval $[y,2y]$, as opposed to an interval like $[y,10y]$ or even $[y, y^{1+\eps}]$ (adjusting the lower bound on $|{\mathcal P}|$ as needed),
since among such primes, once one has a subinterval of the type $[y', 2y']$ where enough primes satisfy the assumptions in this theorem, then we have that the corresponding set $A$ is already an arithmetic progression; Lemma \ref{lem2} then shows that including the remaining primes in the sieve, we would still have that the corresponding set $A$ is an arithmetic progression.

We have a series of remarks (together with proofs) concerning
different ways in which this theorem is near best possible and cannot be improved.  These are discussed in Section \ref{sec:extrarem}.  Roughly, these will
show that the length $(2y)^{1/2-\eps} + 1$ 
on $A$ is essentially the best possible
bound; the lower bound of $y \gg 
(\log N)^{1/2\eps}$ is a necessary assumption; the upper bound on $|R_p|$ is essentially necessary, in order to get the conclusion in the theorem; the lower bound on $|{\mathcal P}|$ is essentially necessary; and then we also give constructions of sets $R_p$ where $A$ is an arithmetic progression with endpoints nearly flush with the endpoints of $\{0,...,N\}$ (the endpoints being $0$ and $N$); finally, we show that it does not extend to polynomial progressions, in the sense that if the $R_p$ are the image of a short interval under a polynomial of degree $2$ and higher (different polynomials allowed for different choices of $p$), it does not follow that $S$ need have polynomial-like structure. 

Using results on the distribution of primes in short intervals, we have the following strengthening of Theorem~\ref{maintheorem} when $\mathcal{P}$ is the set of all primes in $[y,2y]$. In this case, the upper bound on $|A|$ is also asymptotically sharp; see Section~\ref{sec:2.1}.

\begin{corollary}\label{cor:allprimes}
There is an absolute constant $C_0$, such that under the assumptions of Theorem~\ref{maintheorem}, if moreover $\mathcal{P}$ is the set of all primes in $[y,2y]$ and $N>N_0(\eps)+C_0$, then $A$ is an arithmetic progression of length at most $(1+o(1))y^{1/2-\eps}$. 
\end{corollary}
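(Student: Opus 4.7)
The plan is to combine Theorem~\ref{maintheorem} with a short-intervals argument to sharpen the length bound. First, I would verify that taking $\mathcal{P}$ to be the set of all primes in $[y,2y]$ satisfies the hypothesis of Theorem~\ref{maintheorem}: by the prime number theorem there is an absolute constant $C_0$ such that $|\mathcal{P}|=\pi(2y)-\pi(y)\ge y/(2\log y)$ whenever $y\ge C_0$, and the assumption $y\ge(4C\log N)^{1/2\eps}$ gives $y^{2\eps}\ge 4C\log N$, so $y/(2\log y)\ge 2Cy^{1-2\eps}\log N/\log y$, comfortably beating the required lower bound on $|\mathcal{P}|$. Theorem~\ref{maintheorem} then produces an arithmetic progression $A$ of length $k\le(2y)^{1/2-\eps}+1$. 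If $k\le 1$ the claim is trivial, so suppose $k\ge 2$ with common difference $d\ge 1$; since $A\subseteq\{0,1,\ldots,N\}$, we have $d\le N$.

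The core step is to locate, inside $\mathcal{P}$, a prime $p^*$ satisfying $p^*=(1+o(1))y$ and $p^*\nmid d$. Since $d\le N$, the integer $d$ has at most $\log N/\log y$ prime divisors exceeding $y$. By standard results on primes in short intervals (for instance Huxley's theorem, or the Baker--Harman--Pintz bound, either of which yields $\pi(y+y^{\theta})-\pi(y)\gg y^{\theta}/\log y$ for some fixed $\theta<1$ and $y$ large), the window $[y,y+y^{\theta}]$ contains at least $cy^{\theta}/\log y$ primes, which vastly exceeds $\log N/\log y$ because the hypothesis on $y$ forces $\log N\le y^{2\eps}/(4C)\ll y^{\theta}$. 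Hence there is a prime $p^*\in[y,y+y^{\theta}]\subseteq\mathcal{P}$ with $p^*\nmid d$.

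For this $p^*$, since $p^*\nmid d$ and $k<p^*$ (as $k\le(2y)^{1/2-\eps}+1<y\le p^*$ for $y$ large), the reduction $A_{p^*}$ is an arithmetic progression of size exactly $k$, so the inclusion $A_{p^*}\subseteq R_{p^*}$ coming from the definition of $A$ forces
\[
k \;\le\; |R_{p^*}| \;\le\; (p^*)^{1/2-\eps}+1 \;\le\; y^{1/2-\eps}\bigl(1+y^{\theta-1}\bigr)^{1/2-\eps}+1 \;=\; (1+o(1))\,y^{1/2-\eps},
\]
as $y\to\infty$, since $\theta<1$. The main (mild) obstacle is selecting a quantitative input on primes in short intervals strong enough to guarantee that the at most $\log N/\log y$ primes dividing $d$ cannot block the entire short window just above $y$; once such a prime $p^*$ is secured, the improved length bound is immediate from the hypothesis $|R_{p^*}|\le(p^*)^{1/2-\eps}+1$.
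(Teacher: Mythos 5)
Your approach is essentially the one the paper uses. Both proofs run identically in structure: invoke Theorem~\ref{maintheorem} to get that $A$ is an AP of length $\le (2y)^{1/2-\eps}+1$ with some step size $d \le N$; locate a prime $z = p^* = (1+o(1))y$ in $\mathcal{P}$ with $p^*\nmid d$ using a bound on the number of prime divisors of $d$ exceeding $y$ together with a primes-in-short-intervals input; and then observe that $A_{p^*}$ injects into $R_{p^*}$, giving $|A|\le|R_{p^*}| \le (p^*)^{1/2-\eps}+1 = (1+o(1))y^{1/2-\eps}$. The paper phrases the prime-finding step by taking $z$ to be the smallest prime in $[y,2y]$ not dividing $d$, and noting $d\ge\prod_{y\le p<z}p\ge y^{\pi(z-1)-\pi(y)}$, which gives the same bound $\pi(z-1)-\pi(y)\le \log N/\log y$ that you obtain by counting prime divisors of $d$.

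One small imprecision worth fixing: you write that $\log N \le y^{2\eps}/(4C) \ll y^{\theta}$ for ``some fixed $\theta<1$'' coming from Huxley or Baker--Harman--Pintz, but this inequality only holds if $\theta > 2\eps$. If you take $\theta$ to be the specific exponent supplied by one of those theorems (e.g.\ $7/12$ for Huxley, $0.525$ for BHP), the step fails once $\eps$ is close enough to $1/2$. The fix is easy — since $\eps < 1/2$ gives $2\eps < 1$, choose $\theta$ depending on $\eps$ with $\max(2\eps, 7/12) < \theta < 1$; Huxley's asymptotic holds for every $\theta$ in $(7/12,1)$, not just at the endpoint — but as written the dependence of $\theta$ on $\eps$ is implicit. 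The paper handles the same issue explicitly by splitting into the cases $\eps \ge 0.3$ (where $y^{2\eps}$ is above the Huxley threshold) and $\eps < 0.3$ (where it cites a Guth--Maynard short-interval estimate with a wider but still $o(y)$ window). So your route is sound; you would just want to make the choice of $\theta$ explicit.
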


We also have the following strengthening of Theorem~\ref{maintheorem} when each $R_p$ is a short interval, though the proof is much simpler. 

\begin{theorem}\label{thm:interval}
    Let $\eps\in (0,1)$. Suppose $N$ is a positive integer with $N>N_0(\eps)$, and $y$ satisfies 
    \[(16\log N)^{1/\eps}\leq y\leq N.\]
    Let $\mathcal{P}$ be a subset of primes in $[y,2y]$ satisfying
    \[|\mathcal{P}| \geq \frac{4(2y)^{1-\eps}\log N}{\log y}.\]
    For each prime $p\in \mathcal{P}$, choose a prescribed interval $I_p\subseteq \Z_p$ with $|I_p|\leq p^{1-\eps}+1$. Let $A$ be the set of all integers $n\in \{0,1,\ldots, N\}$ such that $n$ mod $p$ is in $I_p$ for every prime $p\in \mathcal{P}$. Then $A$ is an interval of length at most $p_0^{1-\eps}+1$, where $p_0 = \min\{p: p\in \mathcal{P}\}$.
\end{theorem}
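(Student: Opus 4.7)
My plan is to prove Theorem~\ref{thm:interval} in two steps: first establish the diameter bound $L := \max A - \min A \le p_0^{1-\eps}$, and then use a ``short arc'' argument to show that $A$ is actually the full integer interval $[\min A, \max A]$. Together these give $A = \{\min A, \min A+1, \ldots, \max A\}$ of cardinality $L+1 \le p_0^{1-\eps}+1$, as claimed. One may assume $|A| \ge 2$.

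\emph{Step 1 (diameter bound).} Suppose for contradiction $L > p_0^{1-\eps}$, and pick $a,b \in A$ with $b-a = L$. For each $p \in \mathcal{P}$, since $a,b \in I_p \pmod p$ and $I_p$ is an interval in $\Z_p$ of size at most $p^{1-\eps}+1$, the residue $L \pmod p$ lies in $I_p - I_p \subseteq [-p^{1-\eps},p^{1-\eps}] \pmod p$; equivalently, $|L - k_p p| \le p^{1-\eps}$ for some integer $k_p \ge 0$. Applied at $p = p_0$: either $L \ge p_0$ directly, or $L < p_0$ and the $p_0$-constraint forces $L \in [p_0 - p_0^{1-\eps}, p_0 - 1]$; in either case $L \ge p_0 - p_0^{1-\eps}$. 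The hypothesis $y^\eps \ge 16\log N$ (with $N > N_0(\eps)$) supplies $p_0^\eps \ge 2$, so $p_0^{1-\eps} \le p_0/2$ and hence $L \ge p_0/2 \ge y/2$. The same hypothesis also gives $y^\eps > 2^{2-\eps}$, which rearranges to $(y/2)^{1/(1-\eps)} > 2y$. Therefore $L^{1/(1-\eps)} \ge (y/2)^{1/(1-\eps)} > 2y$, so no $p \in \mathcal{P}$ satisfies $p \ge L^{1/(1-\eps)}$: the case $k_p = 0$ is ruled out and every $p \in \mathcal{P}$ has $k_p \ge 1$.

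Set $h := (2y)^{1-\eps}$. Each $p \in \mathcal{P}$ therefore divides the positive integer $M_p := k_p p \in [L-h, L+h]$. Any prime factor in $[y,2y]$ of a positive integer $M$ is at least $y$, so $M$ has at most $\log M/\log y$ such factors. Summing over the at most $2h+1$ positive integers $M$ in $[L-h, L+h]$ (with $M \le L+h \le 2N$ for $N > N_0(\eps)$),
\[
|\mathcal{P}| \;\le\; (2h+1)\,\frac{\log(2N)}{\log y}.
\]
Combined with the hypothesis $|\mathcal{P}| \ge 4h\log N/\log y$, this rearranges to $(2h-1)\log N \le (2h+1)\log 2$, which fails for $h \ge 2$ once $\log N > \tfrac{2h+1}{2h-1}\log 2$---a condition comfortably guaranteed by the theorem's hypotheses. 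This is the desired contradiction.

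\emph{Step 2 (contiguity).} With $L \le p_0^{1-\eps}$ in hand, set $a := \min A$ and pick any integer $c \in [a, a+L]$. For each $p \in \mathcal{P}$, write $I_p = \{r, r+1, \ldots, r + |I_p| - 1\} \pmod p$, so $a \equiv r + i \pmod p$ for some $0 \le i \le |I_p| - 1$. Since $L \le p^{1-\eps} < p/2$ (using $p^\eps > 2$), there is no wrap-around, and $a + L \in A \subseteq I_p \pmod p$ forces $i + L \le |I_p| - 1$. Hence the short arc $r+i, r+i+1, \ldots, r+i+L$ lies entirely in $I_p$, so $c \equiv r + i + (c - a) \pmod p \in I_p$. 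As this holds for every $p \in \mathcal{P}$ and $c \in [0, N]$, we conclude $c \in A$, and hence $A = \{a, a+1, \ldots, a+L\}$.

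The main obstacle is Step~1: ruling out $k_p = 0$ for every $p \in \mathcal{P}$ requires the precise lower bound $y^\eps \ge 16\log N$ (in particular to ensure $y > 2(2y)^{1-\eps}$), and the divisor-count bound is tight against the hypothesis $|\mathcal{P}| \ge 4(2y)^{1-\eps}\log N/\log y$ with essentially no slack---which is exactly why the constants $16$ and $4$ in the hypotheses are what they are.
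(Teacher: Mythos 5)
Your proof is correct. The overall architecture (bound the diameter, then show the short arc between $\min A$ and $\max A$ is wholly contained in each $I_p \pmod p$) matches the paper's proof, but your Step~1 takes a genuinely different route to the diameter bound. The paper sets $L = a_2-a_1$ where $a_1 = \min A$, $a_2 = \max A$, notes $L \equiv \alpha_p \pmod p$ with $|\alpha_p| \le (2y)^{1-\eps}$ for each $p$, applies the pigeonhole principle to extract a single $\alpha$ with $L \equiv \alpha$ for $\gg |\mathcal P| / y^{1-\eps}$ primes, and then argues that if $L \neq \alpha$ the difference $L-\alpha$ would be a nonzero integer exceeding $N^{3/2}$ in absolute value. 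You instead assume $L > p_0^{1-\eps}$, rule out $k_p = 0$ for every $p\in\mathcal P$ by first using the $p_0$-constraint to force $L \ge p_0/2 \ge y/2$, so that each $p$ must divide some positive integer $M_p = k_p p$ in the narrow window $[L-h, L+h]$ with $h = (2y)^{1-\eps}$, and then count: the window holds at most $2h+1$ integers, each contributing at most $\log(2N)/\log y$ prime factors from $[y,2y]$, contradicting $|\mathcal P| \ge 4h\log N/\log y$. Both arguments ultimately exploit that a small nonzero integer cannot have many prime factors from $[y,2y]$, but yours is a direct counting contradiction with no pigeonhole step, and it hands you the sharper diameter bound $L \le p_0^{1-\eps}$ immediately, whereas the paper first obtains $L \le (2y)^{1-\eps}$ and only recovers the $p_0^{1-\eps}+1$ bound at the very end by reading off $|I_{p_0}|$. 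Your Step~2 (contiguity via the no-wrap-around arc $r+i, \ldots, r+i+L$ inside $I_p$) is the same as the paper's. One small note: your bound $L+h \le 2N$ is justified but deserves a word ($h = (2y)^{1-\eps} < 2y/(16\log N) < N$ using $(2y)^\eps \ge y^\eps \ge 16\log N$); replacing $2N$ by $3N$ would also work and change nothing.
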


The theorem is clearly optimal by considering the example $A=\{0,1,\ldots, \lfloor p_0^{1-\eps} \rfloor\}$ so that $A_p=\{0,1,\ldots, \lfloor p_0^{1-\eps} \rfloor\}$ for all $p\geq p_0$. Our motivation for this theorem was mainly to understand how better bounds than in Theorem \ref{maintheorem} are possible if the arithmetic progressions are replaced by intervals.  However, in Section~\ref{future} item (5), we relate Theorem \ref{thm:interval} to certain extensions of Kronecker's theorem; so there are other potential motivations for it. 

\medskip

Next, we discuss several generalizations and variants of Theorems \ref{maintheorem} and \ref{thm:interval}. 

The first generalization concerns what happens when $R_p$'s are progressions with length $|R_p|\gg p^{\theta}$ for some $\theta \in [1/2,1)$. In this case, we will see in Section~\ref{sec:2.3} that there are examples for which $|A|\gg y^{2\theta-1}$ while $A$ fails to be an arithmetic progression (more generally, we show that $A$ can fail to be a convex progression of bounded rank). This 
shows that Theorem~\ref{maintheorem} does not extend to longer progressions and Theorem~\ref{thm:interval} does not extend to general progressions, indicating a potentially more complicated structure for the set $A$. Nevertheless, the following theorem establishes that if locally $A$ ``correlates" with some ``possibly very short" arithmetic progression, then globally $A$ must be an arithmetic progression.

\begin{theorem}\label{thm:AP1/2}
    Let $\theta\in [1/2,1)$, $c\in [2\theta-1,\theta]$, and $\eps>0$. Suppose $N$ is a positive integer with $N> N_0(\eps)$, and $y$ satisfies
\begin{equation*}
(16\log N)^{1/2\eps}\ \leq\ y\ \leq\ N.
\end{equation*}
For each prime $p \in [y,2y]$, choose a prescribed arithmetic progression $R_p \subseteq {\mathbb Z}_p$ with $|R_p| \leq p^{\theta}$. Let $A$ be the set of all integers $n\in \{0,1,\ldots,N\}$ such that $n$ mod $p$ is in $R_p$ for every prime $p \in [y,2y]$.  If there exists an arithmetic progression $Q\subseteq \Z$ with $|Q|\geq y^c$ such that $$|A\cap Q|\geq 8y^{\frac{2\theta-1-c}{2}+\eps}|Q|,$$ then $A$ is an arithmetic progression.
\end{theorem}

Let $A$ be the set from the above theorem. Since $|A_p|\leq p^{\theta}+1$ for all primes $p\in [y,2y]$, Gallagher's larger sieve (Lemma~\ref{GS}) readily implies that $|A|\ll_{\theta} y^{\theta}$ (as mentioned in the remark after Theorem~\ref{thm:interval}, such a bound is sharp up to the implied constant). Building on Theorem~\ref{thm:AP1/2}, our next theorem shows that either we get an improved power-saving upper bound on $|A|$, or $A$ has to be an arithmetic progression.

\begin{theorem}\label{thm:largeA}
    Let $\theta\in [1/2,1)$ and $\eps>0$. Suppose $N$ is a positive integer with $N> N_0(\theta, \eps)$, and $y$ satisfies 
    \[(16\log N)^{1/\eps}\leq y\leq N.\] 
    For each prime $p \in [y,2y]$, choose a prescribed arithmetic progression $R_p \subseteq {\mathbb Z}_p$ with $|R_p| \leq p^{\theta}$. Let $A$ be the set of all integers $n\in \{0,1,\ldots,N\}$ such that $n$ mod $p$ is in $R_p$ for every prime $p \in [y,2y]$. Then the following statements hold:
    \begin{enumerate}
        \item If $|A|\geq 64\sqrt{2}y^{\theta-\frac{1-\theta}{2}+\eps},$ then $A$ is an arithmetic progression. 
    \item If $|A|\geq 80y^{2\theta-1+\eps}$, then $A$ is contained in an arithmetic progression of length at most $64y^{\theta}$.
    \end{enumerate}
    
\end{theorem}

In particular, in this setting, part (1) of the above theorem confirms the dichotomy ``either a set is significantly smaller compared to the sieve bound, or it possesses some strong algebraic structure" mentioned at the beginning of the paper. On the other hand, there are constructions such that $|A|\gg y^{2\theta-1}$ while $A$ is not contained in an arithmetic progression of length $y^{O(1)}$; see Section~\ref{sec:2.3}. Thus, part (2) of the above theorem is essentially sharp.

\medskip

The second generalization concerns what happens if $R_p$ is the {\it union} of several arithmetic progressions of length at most $p^{1/2-\varepsilon}$, rather than just a single progression.  One of the motivations and hopes for this theorem was that it could help solve the problem alluded to before, where if $C_p(x)$ deviates substantially from $2^{|S|}/p$, then it means $S_p$ is mostly contained in a short arithmetic progression mod $p$. The thought was that those extra pieces of $S$ that are not neatly contained in that short arithmetic progression could still, {\it themselves} be contained in the union of a small number of other short arithmetic progressions.  That may be the case, but we have not yet proved it.

Another perhaps {\it stronger} motivation of Theorem~\ref{thm:kAP} below is that it can maybe be used to prove that if $R_p$ is contained in a generalized arithmetic progression of low rank, 
then $S$ is a dense subset of a generalized arithmetic progression of low rank.  It might 
be used to prove this in the special case where $R_p$ is a generalized progression that is 
``skinny" along all but one direction, but is 
``long" in the remaining direction.  Basically, $R_p$ would be a short union of translates of arithmetic progressions on the ``long" directions.

Proving structural results when the $R_p$ is a union of several arithmetic progressions is significantly more challenging than the case where $R_p$ is just a single arithmetic progression; and to make it work, we need to make use of a kind of arithmetic regularity lemma, in addition to Theorem~\ref{maintheorem}.  This new theorem is as follows.

\begin{theorem}\label{thm:kAP}
Let $\eps \in (0, 1/2)$ and $k$ be a positive integer.  Suppose $N$ is a positive integer with $N> N_0(\eps)$, and $y$ satisfies
\begin{equation*}
(\log N)^{1/\eps}\ \leq\ y\ \leq\ N.
\end{equation*}
For each prime $p \in [y,2y]$, choose $k$ prescribed arithmetic progressions $R_p^{(1)}, R_p^{(2)}, \ldots, R_p^{(k)}$ in $\Z_p$ each with length at most $p^{1/2-\eps}$, and let $R_p=\cup_{i=1}^k R_p^{(i)}$. Let $A$ be the set of all integers $n\in \{0,1,\ldots,N\}$ such that $n\in R_p \pmod p$ for every prime $p \in [y,2y]$.  Then, $A$ is contained in the union of the $k$ arithmetic progressions, each of length at most $\exp(\exp(O(k\log k/\eps))) y^{1/2-\eps}$. 
\end{theorem}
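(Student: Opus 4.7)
The plan is to reduce Theorem~\ref{thm:kAP} to Theorem~\ref{maintheorem} via a two-stage procedure: first decompose $A$ into color-coded clusters and apply the $k=1$ theorem to each, then consolidate the resulting clusters into $k$ longer arithmetic progressions. For each $a\in A$ and each prime $p\in[y,2y]$, assign the color $c_p(a)\in\{1,\ldots,k\}$ to be the least index $i$ with $a\bmod p\in R_p^{(i)}$, and let $\sigma_a$ denote the resulting color profile of $a$. Any two elements of $A$ sharing the same profile $\sigma$ lie in the set $A_\sigma:=\{a\in A:\sigma_a=\sigma\}$ which, by construction, satisfies the single-AP hypothesis of Theorem~\ref{maintheorem} with $R_p$ replaced by $R_p^{(\sigma(p))}$; hence each $A_\sigma$ is contained in an AP of length at most $(2y)^{1/2-\eps}+1$, and $A=\bigsqcup_\sigma A_\sigma$ partitions into short APs indexed by realized profiles.

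To control the number of realized profiles, I would run an iterated refinement starting from the trivial partition $\Pi_0=\{A\}$. At each step, take any block $B$ in the current partition and any prime $p$ at which $c_p$ still takes more than one value on $B$; refine $B$ by splitting according to $c_p$. A suitable potential function, e.g., a Shannon-entropy or Ruzsa-style energy on the partition, should decrement by a quantity depending only on $k$ and $\eps$ at each step, forcing termination after at most $T=\exp(O(k\log k/\eps))$ rounds and leaving at most $k^T=\exp(\exp(O(k\log k/\eps)))$ terminal blocks. By design, each terminal block is monochromatic in color at every prime $p$ in a sub-collection of $[y,2y]$ whose density is still sufficient to apply Theorem~\ref{maintheorem}, so each block is contained in an AP of length $\lesssim y^{1/2-\eps}$.

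Finally, to package these many short APs into $k$ long APs, I would group the blocks by the color $c_{p_1}$ they receive at the first splitting prime $p_1$, giving $k$ families $F_1,\ldots,F_k$ with $F_i\bmod p_1\subseteq R_{p_1}^{(i)}$. Using the rigidity of common differences forced by Theorem~\ref{maintheorem} (the common difference of each block AP is essentially determined by the $R_p^{(\sigma(p))}$'s), I would argue that the short APs inside $F_i$ share a common arithmetic lattice and therefore pack into a single AP of length at most $\exp(\exp(O(k\log k/\eps)))\,y^{1/2-\eps}$. The main obstacle is twofold: quantifying the potential decrement to obtain the termination bound $T=\exp(O(k\log k/\eps))$ (the content of the arithmetic regularity lemma alluded to in the excerpt), and then proving that the families $F_i$ really are (contained in) single APs rather than arbitrary unions of many short APs that happen to agree mod $p_1$. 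This consolidation step is where the explicit doubly-exponential bound $\exp(\exp(O(k\log k/\eps)))$ should ultimately emerge, and it is the heart of the technical work.
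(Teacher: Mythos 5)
Your high-level plan — reduce to Theorem~\ref{maintheorem}, run an iterative refinement to find a ``regular'' configuration with a bound of the form $\exp(O(k\log k/\eps))$ on the number of rounds, then consolidate — matches the paper's in spirit, but the specific mechanism you propose diverges from the paper's in a way that leaves a real gap at the consolidation step, which you yourself flag as the heart of the matter. The paper's regularity lemma is \emph{not} a partition refinement of $A$: it finds a single collection $\mathcal{P}\subseteq[y,2y]$ of primes such that, for \emph{every} $i$ and \emph{every} sub-collection $\mathcal{P}'\subseteq\mathcal{P}$ with $|\mathcal{P}'|\geq|\mathcal{P}|/k$, the step size of $S^{(i)}(\mathcal{P}')=\{n\leq N: n\in R_p^{(i)}\ (\mathrm{mod}\ p)\ \forall p\in\mathcal{P}'\}$ cannot drop below $D(S^{(i)}(\mathcal{P}))/\Delta$ with $\Delta=\exp(3k\log k/\eps)$. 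The iteration that proves this is a nested chain $\mathcal{P}_0\supset\mathcal{P}_1\supset\cdots$ of prime sets (shrinking by a factor $\leq k$ each round), not a refinement of a partition of $A$; the potential that controls termination is simply $|S^{(i)}(\cdot)|$, which grows by a factor $\geq\Delta/2$ each time the step size collapses, yet is always $\leq(2y)^{1/2-\eps}+1$ by Theorem~\ref{maintheorem}.

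The crucial point your consolidation misses is that the paper never partitions $A$ into monochromatic blocks at all. Instead, for each $n\in A$, pigeonhole at the primes of $\mathcal{P}$ gives one color $i$ and one sub-collection $\mathcal{P}'$ with $|\mathcal{P}'|\geq|\mathcal{P}|/k$ and $n\in S^{(i)}(\mathcal{P}')$. Thus $A\subseteq\bigcup_{i=1}^k T_i$ where $T_i=\bigcup_{\mathcal{P}'}S^{(i)}(\mathcal{P}')$. These $S^{(i)}(\mathcal{P}')$ are not disjoint: they all contain $S^{(i)}(\mathcal{P})$, in particular they share an anchor point $a\in S^{(i)}(\mathcal{P})$, their step sizes all divide $d=D(S^{(i)}(\mathcal{P}))$ and are $\geq d/\Delta$, and each has length $\leq(2y)^{1/2-\eps}+1$. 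Taking $d'=\gcd$ of all these step sizes gives $d/d'\leq\mathrm{lcm}(1,\ldots,\Delta)=\exp(O(\Delta))$, so every $S^{(i)}(\mathcal{P}')$ lies in $\{a+jd': |j|\leq (2y)^{1/2-\eps}\cdot\exp(O(\Delta))\}$, and $T_i$ packs into a single AP of the claimed length. By contrast, your $k^T$ disjoint profile-blocks have no shared anchor: two blocks agreeing at $c_{p_1}$ only agree modulo one prime, which says essentially nothing about their location in $[0,N]$, and the ``rigidity of common differences'' you invoke does not force them into a common short AP. Without the nested-family / common-anchor structure, the consolidation step fails, and grouping by color at $p_1$ alone cannot produce the required $k$ APs.
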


In this case, it is impossible to obtain a strong inverse result like Theorem~\ref{maintheorem}. Indeed, $R_p$ may be a union of $k$ short arithmetic progressions for each prime $p\in [y,2y]$, while $A$ itself fails to be a union of $k$ arithmetic progressions, see Remark~\ref{rem:kAPno}.

\medskip
The third variant concerns bounding the size of one-dimensional ill-distributed sets. Following \cite{helfgott,walsh2}, we call a set $A\subseteq[N]$ to be \emph{ill-distributed} if for some $\theta\in (0,1)$, $|A_p|\leq p^\theta$ holds for {\it every} prime $p$. As explained by Walsh~\cite[Example 2.1]{walsh2}, it follows from Gallagher's larger sieve (Lemma~\ref{GS}) or \cite[Theorem 1.2]{walsh2} that $|A|\ll(\log N)^{\frac{\theta}{1-\theta}}$; however, it seems that the largest such $A$ one can possibly construct has size about $(\log N)^\theta$. We are not aware of any previous results improving the bound $|A|\ll(\log N)^{\frac{\theta}{1-\theta}}$, even when additional structural assumptions are made about $A_p$. Recall that in Theorems~\ref{maintheorem} and~\ref{thm:interval}, we only considered primes $p$ in a dyadic interval $[y,2y]$ and obtained asymptotically sharp upper bounds. With the help of Theorems~\ref{maintheorem} and~\ref{thm:interval}, we present two improved upper bounds on $|A|$ under the assumption that $A_p$ is contained in an interval (with $\theta<1$) or union of $k$ arithmetic progressions (with $\theta<1/2$) for a larger range of primes $p$. In particular, the next two theorems are parallel to the results of Green--Harper and Shao mentioned earlier. 

\begin{theorem}\label{thm:interval_sieve}
    Let $\eps\in (0,1)$. Suppose $N$ is a positive integer with $N>N_0(\eps)$ and $p_0 = p_0(N)$ is a prime depending on $N$ with $p_0\leq (16\log N)^{1/\eps}$.   
    For each prime $p_0\leq p\leq 2(16\log N)^{1/\eps}$, choose a prescribed interval $I_p\subseteq\Z_p$ with $|I_p|\leq p^{1-\eps}+1$. Let $A$ be the set of all integers $n\in \{0,1,\ldots,N\}$ such that $n \in I_p \pmod p$ holds for each $p_0\leq p\leq 2(16\log N)^{1/\eps}$. Then $A$ is contained in an interval of length at most $\max(p_0^{1-\eps}+1,N_0(\eps))$. 
\end{theorem}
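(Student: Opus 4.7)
The plan is to deduce this from Theorem~\ref{thm:interval} by iteratively shrinking an interval known to contain $A$, applying Theorem~\ref{thm:interval} at successively smaller dyadic scales of primes within the allowed range $[p_0, 2(16\log N)^{1/\eps}]$. First I apply Theorem~\ref{thm:interval} with $y_1 := (16\log N)^{1/\eps}$ and the set $\mathcal{P}_1$ of all primes in $[y_1, 2y_1]$; by the prime number theorem $|\mathcal{P}_1| \gg y_1/\log y_1$, which comfortably exceeds the required $4(2y_1)^{1-\eps}\log N/\log y_1$ since $y_1^\eps = 16\log N \geq 2^{4-\eps}\log N$. This places $A$ in an interval $[a_1, a_1+L_1]$ with $L_1 \leq (2y_1)^{1-\eps}+1$.

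Now I iterate. Suppose at step $i$ we have $A \subseteq [a_i, a_i + L_i]$ with $L_i > \max(p_0^{1-\eps}+1, N_0(\eps))$. Translating to $A^{(i)} := A - a_i \subseteq \{0, \ldots, L_i\}$, the constraints become $A^{(i)} \in (I_p - a_i)\pmod p$ for each $p$ in the original prime range, with $I_p - a_i$ still an interval in $\Z_p$ of the same length. I set $y_{i+1} := \max\bigl(p_0,\,(16\log L_i)^{1/\eps}\bigr)$ and split into two cases. In Case~A ($y_{i+1} = (16\log L_i)^{1/\eps} > p_0$) I apply Theorem~\ref{thm:interval} to $A^{(i)}$ with ambient integer $L_i$ and primes in $[y_{i+1}, 2y_{i+1}]$, yielding $L_{i+1} \leq (2y_{i+1})^{1-\eps}+1$, a tower-of-logarithms contraction that can run at most $O(\log^{\ast} N)$ times before either $L_i$ falls below $N_0(\eps)$ or $y_{i+1}$ drops to $p_0$. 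In Case~B ($y_{i+1} = p_0$) I apply Theorem~\ref{thm:interval} with $y = p_0$ and primes in $[p_0, 2p_0]$: if $L_i \geq p_0$ I use ambient integer $L_i$, while if $L_i < p_0$ I embed $A^{(i)} \subseteq \{0, \ldots, p_0\}$ and use ambient $p_0$. Either way the conclusion is that $A$ lies in an interval of length at most $p_0^{1-\eps}+1$, completing the argument.

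The main technical obstacle is the Case~B subcase $L_i < p_0$ combined with the possibility that $p_0$ itself is too small to satisfy the hypothesis $p_0^\eps \geq 16\log p_0$ needed to apply Theorem~\ref{thm:interval} with $N=y=p_0$. In this degenerate regime $p_0$ is bounded by a constant $C(\eps)$, whence $L_i < p_0 \leq C(\eps)$; absorbing $C(\eps)$ into $N_0(\eps)$ ensures the iteration would already have halted at the stopping condition $L_i \leq N_0(\eps)$. The remaining checks---that $|\mathcal{P}_{i+1}|$ suffices by the prime number theorem whenever $y_{i+1}^\eps \geq 16\log L_i$, that each shifted $I_p - a_i$ is still an interval in $\Z_p$, and that $L_i > N_0(\eps)$ is available for each application of Theorem~\ref{thm:interval}---are all routine given our choice of $y_{i+1}$.
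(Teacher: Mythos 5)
Your proof is correct and follows essentially the same iterative descent as the paper: both repeatedly apply Theorem~\ref{thm:interval} at a dyadic prime scale $y \approx (16\log L)^{1/\eps}$ where $L$ is the current ambient length, shrinking the containing interval until either $L$ drops below $N_0(\eps)$ or the scale reaches $p_0$, at which point one final application with $y = p_0$ gives the bound $p_0^{1-\eps}+1$. The only cosmetic difference is in the terminal step: you handle the $L_i < p_0$ subcase by embedding $A^{(i)}$ into $\{0,\dots,p_0\}$ and absorbing the degenerate small-$p_0$ regime into $N_0(\eps)$, whereas the paper sets the ambient integer to $\lceil\exp(p_0^\eps/16)\rceil$; these are equivalent bookkeeping choices.
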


\begin{theorem}\label{thm:kAP_sieve}
    Let $\eps\in (0,1/2)$, $k$ be a positive integer. Suppose $N$ is a positive integer with $N>N_0(\eps)$ and $p_0=p_0(N)$ is a function of $N$ with $p_0\leq (\log N)^{1/\eps}/2$. For each prime $p_0\leq p\leq (\log N)^{1/\eps}$, choose $k$ prescribed arithmetic progressions $R_p^{(1)}, R_p^{(2)}, \ldots, R_p^{(k)}$ in $\Z_p$ each with length at most $p^{1/2-\eps}$, and let $R_p=\cup_{i=1}^k R_p^{(i)}$. Let $A$ be the set of all integers $n\in \{0,1,\ldots,N\}$ such that $n \in R_p \pmod p$ holds for every prime $p_0\leq p\leq (\log N)^{1/\eps}$. Then $|A|\ll_{k,\eps}(\log N)^{1/2-\eps}+p_0^{1/2-\eps}$.
\end{theorem}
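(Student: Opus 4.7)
The plan is to apply Theorem~\ref{thm:kAP} twice, iterating on the effective ambient parameter. A single application, which requires $y\asymp(\log N)^{1/\eps}$ to stay within our prime range $[p_0,(\log N)^{1/\eps}]$, only gives the weaker bound $|A|\ll_{k,\eps}(\log N)^{(1/2-\eps)/\eps}$. But after the first application $A$ is confined to a union of $k$ APs of length $L_1\ll_{k,\eps}(\log N)^{(1/2-\eps)/\eps}$, and crucially $\log L_1\asymp_\eps\log\log N$, so a second application with a much smaller $y_2$ can be carried out and produces a much stronger bound.

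Concretely, first I take $y_1=(\log N)^{1/\eps}/2$, so that the primes in the dyadic interval $[y_1,2y_1]$ are exactly the primes in $[(\log N)^{1/\eps}/2,(\log N)^{1/\eps}]\subseteq[p_0,(\log N)^{1/\eps}]$, and I apply Theorem~\ref{thm:kAP} to these. This yields $A\subseteq\bigcup_{i=1}^k P_i^{(1)}$, each $P_i^{(1)}$ an AP of length $L_1\ll_{k,\eps}y_1^{1/2-\eps}\ll_{k,\eps}(\log N)^{(1/2-\eps)/\eps}$. For each $i$, I pass from $P_i^{(1)}$ to the affinely equivalent interval $\{0,1,\ldots,L_1\}$; this transformation preserves the hypothesis that, for every prime $p$ in $[p_0,(\log N)^{1/\eps}]$, the image mod $p$ is a union of $k$ APs of length $\leq p^{1/2-\eps}$, since invertible affine maps on $\mathbb{Z}_p$ send APs to APs of the same length. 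I then apply Theorem~\ref{thm:kAP} again to this smaller set with $L_1$ in place of $N$, splitting on the size of $p_0$. If $p_0\leq(\log L_1)^{1/\eps}$, I take $y_2=(\log L_1)^{1/\eps}$, and Theorem~\ref{thm:kAP} bounds the AP lengths by $\ll_{k,\eps}(\log L_1)^{(1/2-\eps)/\eps}\ll_\eps(\log\log N)^{O_\eps(1)}$, which is $\ll_{k,\eps}(\log N)^{1/2-\eps}$ for $N$ large in terms of $\eps$. If instead $p_0>(\log L_1)^{1/\eps}$, I take $y_2=p_0$, so the hypothesis $y_2\geq(\log L_1)^{1/\eps}$ holds by the case assumption and $[p_0,2p_0]$ lies in our prime range since $2p_0\leq(\log N)^{1/\eps}$; then the AP lengths are $\ll_{k,\eps}p_0^{1/2-\eps}$. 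Summing over the resulting $k^2$ APs yields $|A|\ll_{k,\eps}(\log N)^{1/2-\eps}+p_0^{1/2-\eps}$.

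The main obstacle is the first application, since Theorem~\ref{thm:kAP} is stated under the hypothesis $y\geq(\log N)^{1/\eps}$ but I need $y_1=(\log N)^{1/\eps}/2$. I would address this by re-examining the proof of Theorem~\ref{thm:kAP}: the threshold $(\log N)^{1/\eps}$ is really a convenient form of the inequality $y^{\eps}\gtrsim\log N$, and I expect the same argument to go through for $y\geq c(\log N)^{1/\eps}$ with any fixed $c>0$ at the cost of worse multiplicative constants absorbed into $C_{k,\eps}$. Apart from this constant-factor mismatch, the second step has no such issue because one has $y_2\geq(\log L_1)^{1/\eps}$ exactly in both subcases.
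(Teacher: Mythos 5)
Your plan has the same shape as the paper's---apply Theorem~\ref{thm:kAP} once at scale $y_1\asymp(\log N)^{1/\eps}$ to confine $A$ to $k$ short APs, then rescale each AP and apply the theorem a second time at a smaller scale. But the rescaling step contains a genuine gap.

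You assert that sending $P_i^{(1)}=\{a_i,a_i+d_i,\ldots,a_i+L_1 d_i\}$ to $\{0,1,\ldots,L_1\}$ by $x\mapsto(x-a_i)/d_i$ ``preserves the hypothesis that, for every prime $p$ in $[p_0,(\log N)^{1/\eps}]$, the image mod $p$ is a union of $k$ APs of length $\leq p^{1/2-\eps}$, since invertible affine maps on $\mathbb{Z}_p$ send APs to APs.'' This is true only when $p\nmid d_i$. If $p\mid d_i$, the map $x\mapsto(x-a_i)/d_i$ is not a well-defined affine map on $\Z_p$ at all: every element of $P_i^{(1)}$ is $\equiv a_i\pmod p$, and the value of $(x-a_i)/d_i\pmod p$ is not determined by $x\pmod p$. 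The original constraint $x\in R_p\pmod p$ therefore gives no constraint whatsoever on $A_i=(A\cap P_i^{(1)}-a_i)/d_i$ modulo such $p$. This is not a minor bookkeeping issue: $d_i$ can be as large as $N$, so it can have up to $\log N/\log y_2$ prime factors of size $\geq y_2$. With your choice $y_2=(\log L_1)^{1/\eps}\asymp(\log\log N)^{1/\eps}$, the interval $[y_2,2y_2]$ only contains about $y_2/\log y_2\asymp(\log\log N)^{1/\eps}/\log\log\log N$ primes, while $d_i$ may have $\sim\eps\log N/\log\log\log N$ prime factors of this size --- far more than the number of primes available. So after the rescaling you may be left with \emph{no} usable primes in $[y_2,2y_2]$, and neither Theorem~\ref{thm:kAP} nor any partial variant of it can be applied.

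The paper's proof handles exactly this. It rescales $A\cap Q_i$ to $A_i\subseteq[0,\ell_i]$ as you do, but reapplies at the much larger scale $y=C'\log N$ (not $(\log\log N)^{1/\eps}$), where the number of primes is $\asymp C'\log N/\log\log N$ while the number of prime divisors of $d_i$ in that range is $\leq\log N/\log(C'\log N)$; choosing $C'=C'(k,\eps)$ large makes the loss a small fraction. Crucially, it then invokes Remark~\ref{rem:kAP} --- the version of Theorem~\ref{thm:kAP} that requires the congruence condition only for a dense subset of at least $y^{1-\eps+\delta}$ primes in $[y,2y]$ --- since the set $\mathcal{P}_i=\{p\in[C'\log N,2C'\log N]:p\nmid d_i\}$ is a proper subset. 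Your proposal misses both of these ingredients: the need to track which primes survive the rescaling, and the need for a variant of the theorem that tolerates a subset of the dyadic primes. Your final worry (whether $y\geq(\log N)^{1/\eps}$ can be relaxed to $y\geq(\log N)^{1/\eps}/2$) is indeed only a constant-factor matter as you suspect; the real obstacle is the one above.
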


It is easy to show that Theorem~\ref{thm:interval_sieve} is sharp and Theorem~\ref{thm:kAP_sieve} is optimal up to the implied constant; see Remark~\ref{rem:sieve}.

%\textcolor{blue}{Due to the use of Theorem~\ref{thm:AP1/2}, we cannot improve Gallagher's sieve in the sense of Theorem~\ref{thm:kAP_sieve} for $|R_p|\ll p^{\theta}$ with $\theta\geq 1/2$.}

\subsection{Other extensions}\label{sec:extension}
Besides just extending the length of the progression $R_p$ as in Theorems~\ref{thm:AP1/2} and~\ref{thm:largeA} or replacing it with a union of arithmetic progressions as in Theorem \ref{thm:kAP}, one could also even forego working with arithmetic progressions at all, and instead work with sets of high additive energy (generalized arithmetic progressions have this property). In this setting, we prove some inverse theorems of a similar flavor, although they are weaker than the ones discussed in Section~\ref{sec:maintheorem}.

The theorem below shows that if the additive energy of $S_p$ is large for all primes $p\in [y,2y]$, then the additive energy of $S$ has to be large. Recall that for a subset $S$ of an abelian group, its additive energy is defined to be $$E(S)=\#\{(a,b,c,d) \in S^4:a+b=c+d\}.$$

\begin{theorem}\label{thm:energy}
    Let $\theta\in(0,1)$ and $\delta>0$. Suppose $y$ satisfies 
    \[(\log N)^{2/(1-\theta)}<y\leq N,\]
    and $S\subseteq[N]$ is a set such that $|S_p|\leq p^{\theta}$ and $E(S_p)\geq \delta |S_p|^3$ hold for each prime $p\in[y,2y]$. Then $E(S)\gg_{\theta}\delta|S|^3$.
\end{theorem}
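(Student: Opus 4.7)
The plan is to prove $E(S) \gg_\eps \delta |S|^3$ by averaging the modular energies $T_p(S) := \#\{(a,b,c,d) \in S^4 : a+b \equiv c+d \pmod{p}\}$ across primes $p \in [y, 2y]$. First, every quadruple contributing to $E(S)$ lies in every $T_p(S)$, while the surplus $T_p(S) - E(S)$ counts $(a,b,c,d) \in S^4$ for which $m := a+b-c-d$ is a nonzero integer in $[-2N,2N]$ divisible by $p$. Since each such $m$ has at most $\log(2N)/\log y$ prime divisors in $[y,2y]$, summing and dividing by $\pi^* \gg y/\log y$ yields
\[
E(S) \,\geq\, \mathbb{E}_{p \in [y, 2y]} T_p(S) \,-\, O\!\left(\frac{|S|^4 \log N}{y}\right).
\]

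The key step is a lower bound on each $T_p(S)$. Writing $g(x) = \#\{s \in S : s \equiv x \pmod{p}\}$, supported on $S_p$ with $\|g\|_1 = |S|$, we have $T_p(S) = \|g \ast g\|_2^2$ and $E(S_p) = \|\mathbf{1}_{S_p} \ast \mathbf{1}_{S_p}\|_2^2$. The pointwise inequality $g \geq \mathbf{1}_{S_p}$ yields $T_p(S) \geq E(S_p) \geq \delta |S_p|^3$, while Cauchy--Schwarz gives $T_p(S) \geq |S|^4/|S_p+S_p| \geq |S|^4/|S_p|^2$. The target is
\[
T_p(S) \,\gg\, \delta \cdot \frac{|S|^4}{|S_p|};
\]
when the fiber counts $(n_x)_{x \in S_p}$ are balanced ($n_x \asymp |S|/|S_p|$), the lower bound $g \geq \tfrac{1}{2}(|S|/|S_p|) \mathbf{1}_{S_p}$ combined with $E(S_p) \geq \delta |S_p|^3$ yields this immediately. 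In the skewed case I would pigeonhole on a dyadic fiber level $\{x \in S_p : 2^j \leq n_x < 2^{j+1}\}$ carrying at least a $(\log_2 |S|)^{-1}$-fraction of $|S|$, and apply the balanced bound there. Since $|S_p| \leq (2y)^\eps$, averaging gives $\mathbb{E}_p T_p(S) \gg_\eps \delta |S|^4/y^\eps$.

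Combining the two steps, $E(S) \gg_\eps \delta |S|^4/y^\eps - O(|S|^4 \log N/y)$. The hypothesis $y > (\log N)^{2/(1-\eps)}$ gives $y^{1-\eps} > (\log N)^2$, so the error is dominated by the main term whenever $\delta \gg (\log N)^{-1}$; the residual regime $\delta \lesssim (\log N)^{-1}$ is handled by the trivial bound $E(S) \geq |S|^2 \geq \delta |S|^3$ (valid for $|S| \leq \delta^{-1}$) together with a short case analysis on $|S|$. For $|S| \geq y^\eps$ the averaged bound yields $E(S) \gg_\eps \delta |S|^3$ directly, while for $|S| < y^\eps$ a collision estimate shows $|S_p| \gtrsim |S|$ for most primes, after which $T_p(S) \geq E(S_p) \geq \delta |S_p|^3 \gg \delta |S|^3$ suffices. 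The main obstacle is the central estimate $T_p(S) \gg \delta |S|^4/|S_p|$ with \emph{linear} dependence on $\delta$: a naive Balog--Szemer\'edi--Gowers subset selection on $S_p$ would lose a polynomial factor in $\delta$, so the dyadic popularity pigeonhole (or a comparable Fourier-analytic device) is essential in order to preserve linearity when the fiber distribution is severely skewed.
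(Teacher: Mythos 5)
Your setup is correct and matches the paper's starting point: bounding the product $\prod_{a+b\neq c+d}|a+b-c-d|\leq (2N)^{|S|^4-E(S)}$, taking logs, and restricting to prime divisors in $[y,2y]$ gives exactly $E(S)\geq \mathbb{E}_p T_p(S) - O(|S|^4\log N/y)$, and the observation $T_p(S)\geq E(S_p)\geq \delta|S_p|^3$ is also used by the paper. The genuine gap is the ``central estimate'' $T_p(S)\gg \delta|S|^4/|S_p|$, which is false. Take $S_p=P\cup B$ with $P$ an arithmetic progression of length $k$ and $B$ a Sidon set of size $k$, disjoint from $P$, so $|S_p|\asymp k$ and $E(S_p)\geq E(P)\gg k^3$, i.e.\ $\delta\gg 1$. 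Put $g=t\mathbf{1}_B+\mathbf{1}_P$ with $t\approx |S|/k$, so $\mathrm{supp}(g)=S_p$ and $\|g\|_1=|S|$. Expanding $\|g*g\|_2^2$ and using the Sidon property to control every cross term, one finds $T_p(S)\asymp t^4 E(B)\asymp |S|^4/k^2$ once $|S|\gg k^2$, whereas $\delta|S|^4/|S_p|\gg |S|^4/k$; the claimed bound fails by a factor of roughly $|S_p|$. The dyadic pigeonhole cannot repair this: the popular level is $B$, and the only lower bound available there is $E(B)\geq |B|^2$, because the hypothesis $E(S_p)\geq\delta|S_p|^3$ tells you nothing about the energy of the \emph{subset} of $S_p$ that actually carries the mass of $g$. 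So ``apply the balanced bound there'' has no justification: the balanced bound consumes $E(S_p)\geq\delta|S_p|^3$, a piece of information that does not transfer to a selected dyadic level.

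The paper never tries to improve the per-prime bound beyond $T_p(S)\geq E(S_p)\geq\delta|S_p|^3$; this gives $E(S)y\gg \delta\sum_p |S_p|^3\log p - O(|S|^4\log N)$. The missing ingredient in your plan is the sieve. Gallagher's larger sieve gives $|S|\ll_\eps y\big/\sum_p(\log p/|S_p|)$, and H\"older's inequality gives $\sum_p|S_p|^3\log p\cdot\big(\sum_p\log p/|S_p|\big)^3\gg y^4$; combining these yields both $|S|^4\log N\ll_\eps (\log N)^{-1}\sum_p|S_p|^3\log p$ (which absorbs the error term) and $y^{-1}\sum_p|S_p|^3\log p\gg_\eps |S|^3$ (which converts the surviving sum into $|S|^3$). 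So the paper's gain over the trivial $T_p\geq\delta|S_p|^3$ comes entirely from the interaction of the sieve with H\"older, not from any pointwise improvement of the per-prime energy bound; if you want to complete your route, you would need to import that sieve input rather than strengthen $T_p$.
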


One can compare this with several results of Green and Harper~\cite[Section 3]{green}. In particular, \cite[Lemma 3.2]{green} implies that if $\delta>0$, and $S\subseteq [N]$ is a set such that $|S_p|\leq (p+1)/2$ and $E(S_p)\geq (\frac{1}{16}+\delta)p^3$ for each prime $p\leq N^{1/2}$, then $E(S)\gg \frac{\delta|S|^4}{N^{1/2}\log N}$. 

Next, we prove some inverse theorems for generalized arithmetic progressions, roughly speaking, if $S_p$ is a GAP with bounded rank for each prime $p\in [y,2y]$, then a positive proportion of $S$ must be contained in a GAP with bounded rank and volume. To state these theorems formally, we first recall some basic terminology on generalized arithmetic progressions. For an abelian group $G$, a \emph{generalized arithmetic progression (GAP) over $G$ of rank $r$} is of the form 
$$
\bigg\{a+\sum_{i=1}^{r} n_i v_i: 0\leq n_i\leq N_i \text{ for all } 1\leq i \leq r\bigg \},
$$
where $a,v_1, \ldots, v_r \in G$, and $N_1, \ldots, N_r$ are integers. More generally, a set $P \subset G$ is called a \emph{convex progression of rank $r$} if there is 
a symmetric convex body $Q \subset \mathbb{R}^r$, a homomorphism $\psi:\Z^r \to G$, and some element $a\in G$ such that $a+\psi(\Z^r \cap Q)=P$. Note that GAPs are special convex progressions. Our proof techniques for the next three results extend to the setting where $R_p$'s are convex progressions with bounded rank. For simplicity, we only state and prove the version for GAPs.

\begin{theorem}\label{thm:inverseGAP}
    Let $r$ be a positive integer, $\theta\in(0,1)$, and $\eps\in (0,1-\theta)$. Suppose $N$ is a positive integer with $N>N_0(r,\eps)$, and the two parameters $\delta$ and $y$ satisfy
    \[\delta\geq y^{\theta-1+\eps},\quad(\log N)^{2/\eps}<y\leq N.\]
    Suppose that $S\subseteq[N]$ is a set such that for each $p\in [y,2y],$ there is a GAP $R_p \subset \Z_p$ of rank at most $r$ with $|R_p|\leq p^\theta$, $S_p\subseteq R_p$, and $|S_p|\geq \delta |R_p|$. Then there exists a GAP $Q$ of rank $O( (r+\log\delta^{-1})^{1+o(1)})$ such that $S\subseteq Q$ and $|Q|\leq \exp(O((r+\log\delta^{-1})^{1+o(1)}))|S|$.
\end{theorem}
Note that the rank of $Q$ in Theorem~\ref{thm:inverseGAP} is potentially much larger than $r$. However, intuitively, $S$ should be efficiently contained in a GAP $Q$ of rank at most $r$. The following theorem partially justifies this, showing that we can cover $S$ using translates of a GAP $Q$ with rank at most $r$ rather than a single GAP. However, the number of translations here is approximately double-exponential in $r$, leading to a poorer bound on the density of $S$ in a single GAP compared to Theorem~\ref{thm:inverseGAP}.

\begin{theorem}\label{thm:inverseGAPsamerank}
    Let $r$ be a positive integer, $\theta\in (0,1)$, $\eta \in (0,1/2)$, and $\eps\in (0,(1-\theta)/r)$. Suppose $N$ is a positive integer with $N> N_0(r,\eps)$, and the two parameters $\delta$ and $y$ satisfy
    \[\delta\geq y^{\frac{(\theta-1)(1-\eta)}{r+1-\eta}+\eps},\quad\quad \left(8^{r+3}\log N\right)^{1/(r\eps)}<y\leq N.\]
    Let $\mathcal{P}$ be a subset of primes in $[y,2y]$ such that $|\mathcal{P}|\geq y/(4\log y)$. Suppose that $S\subseteq[N]$ is a set such that for each prime $p\in \mathcal{P},$ there is a GAP $R_p \subset \Z_p$ of rank at most $r$ such that $$|R_p|\leq p^\theta, \quad S_p\subseteq R_p, \quad |S_p|\geq \delta |R_p|.$$ Then there exists a GAP $Q$ of rank at most $r$ and size at most $2^{3r+1}\delta^{-(1+\frac{r}{1-\eta})}|S|$, such that $S$ is covered by at most $\exp(C\exp(Cr)\cdot\log\eta^{-1})$ translates of $Q$, where $C$ is an absolute constant.
\end{theorem}

As an application of Theorem~\ref{thm:inverseGAPsamerank}, we prove the following theorem, which can be viewed as a partial extension of Theorem~\ref{thm:largeA}. 

\begin{theorem}\label{thm:largeGAP}
 Let $r$ be a positive integer, $\theta\in (0,1)$, and $\eps\in (0,(1-\theta)/(r+1))$. Suppose $N$ is a positive integer with $N> N_0(r,\eps)$, and the two parameters $\delta$ and $y$ satisfy
    \[\left(8^{r+3}\log N\right)^{2/(r\eps)}<y\leq N.\]
Suppose that $S\subseteq[N]$ is a set such that for each prime $p\in [y,2y],$ there is a GAP $R_p \subset \Z_p$ of rank at most $r$ with $S_p\subseteq R_p$ and $|R_p|\leq p^\theta$. If $|S|\geq y^{\theta-\frac{1-\theta}{r+1}+\eps}$, then there exists a GAP $Q$ of rank $\leq r$ and size at most $2^{3r+5} y$, such that $S$ is covered by at most $\exp(C\exp(Cr)\cdot\log\eps^{-1})$ translates of $Q$, where $C$ is an absolute constant.
\end{theorem}

Observe that when $\theta<\frac{1-\theta}{r+1}$ (that is, $\theta<\frac{1}{r+2}$) and $\eps$ is sufficiently small, the condition $|S|\geq y^{\theta-\frac{1-\theta}{r+1}+\eps}$ holds trivially, unless $S$ is empty. Thus, in this setting, Theorem~\ref{thm:largeGAP} implies that if $y\geq (\log N)^{O_{r,\theta}(1)}$, then $S$ is always covered by $\ll_{r,\theta} 1$ many translates of a GAP $Q$ of rank $\leq r$ and size $\ll_{r} y$; this can be viewed as a partial extension of Theorem~\ref{maintheorem}.

\subsection{Future Directions and Questions} \label{future}
Here, we discuss a few problems worth considering as potential future directions. 
\begin{enumerate}

\item The first thing worth asking is whether the conclusion in Theorems \ref{thm:inverseGAP} and \ref{thm:inverseGAPsamerank} can be simultaneously enhanced so that $S$ is efficiently contained in a GAP of rank at most $r$.  Perhaps there are constructions showing that this is not always possible.

\item Suppose the answer to the above question is positive.  This is somewhat counterintuitive, since one would expect the rank of the GAP containing $S$ should {\it expand} as one adds more and more primes to the sieving process.  Could there perhaps be interesting uses of this phenomenon?  For example, perhaps certain search algorithms can be improved.  Imagine, for example, a search process where at each step we pass to a subset of previous numbers that mod $p$ are in a given arithmetic progression, then eventually search over some residual set.  If the arithmetic complexity of that residual set is low (e.g., the GAP has low rank), then it should make it easier to search through it.

\item We have a strong result for sets $S$ when $R_p$ has length at most $p^{1/2-\varepsilon}$, showing that $S$ is an arithmetic progression; and then we know that this does not continue to hold once $R_p$ is about $p^{1/2}$ (though, if we assume a little extra structure, as is done in Theorem \ref{thm:AP1/2}, we {\it can} deduce that $S$ is an arithmetic progression even when the $|R_p| > p^{1/2}$). However, it may still be true that $S$ has some very precise structure. Perhaps it turns out to be the case that 
$S$ is approximately a convex progression of rank at most $r-1$ when $|R_p| < p^{1-1/r-o(1)}$; see Section~\ref{sec:2.3} for further discussions. What is the true structure in these cases?

\item We have seen that it is not possible to prove that if the $R_p$ are polynomial images of intervals for $p\in [y,2y]$, then $S$, itself, is the polynomial image of intervals.  But perhaps this is still true when the $R_p$ are special types of polynomial images, or perhaps there is a yet more general structural result that covers this case.  It would be worth investigating this.

\item Theorem \ref{thm:interval} implies that for each $p \in {\mathcal P}$, if $\alpha_p \in {\mathbb R}$ and if we define $\lambda_p := 1/p$, then the set of integers $t \leq N$ satisfying 
$\| \lambda_p t - \alpha_p\| \ll p^{-\eps}$, if they exist at all, form a short interval of width at most $O(y^{1-\eps})$.  This formulation is related to what Gonek and Montgomery \cite{gonek} call a ``localized" and ``quantitative" version of Kronecker's Theorem, except that in our case the $\lambda_p$'s are specific numbers (reciprocals of primes), and the size of the error $O(p^{-\eps})$ is significantly smaller.  Our choosing the $\lambda_p$ to be reciprocals of primes from $[y,2y]$ ensures that linear forms $\mu \cdot \lambda \neq 0$ for $\lambda = (\lambda_p)_{p \in {\mathcal P}}$ and $\mu = (\mu_p)_{p \in {\mathcal P}}$ a non-zero vector of integers with $|\mu_p| < y$.  In fact, this is one of the simplest choices for the $\lambda_p$'s with this property.  Our conclusion does not claim that such $t$'s always exist, but merely points to what they could possibly be, which is an interval of width at most $O(y^{1-\eps})$, which is related to what Gonek and Montgomery call ``localized".\footnote{Gonek and Montgomery refer to a variant of Kronecker as ``localized" if for any interval $T$ such that $|T|$ satisfies certain constraints, there exists $t \in T$ so that $t$ so that $\|\lambda_i t - \alpha_i\| < \eps$ for all $i$ in the index set for the $\lambda_i$'s and $\alpha_i$'s.  What we are saying is related, but ultimately different.  We are saying that if there exists an interval containing such $t$, then in fact there is a whole interval $T$ where this is true.} The main question that all this motivates is:  can one generalize Theorem \ref{thm:interval} to make it more closely resemble -- while being much stronger than -- these generalizations of Kronecker's Theorem from the literature?  In particular, can one get a result where the $\lambda_p$'s are not required to be reciprocals of primes, but can include a broad range of sets of numbers with prime-like behavior (e.g., satisfy something close to the Fundamental Theorem of Arithmetic)?

\end{enumerate}

\subsection{Some odds and ends}\label{oddsandends}

Let $S$ be a set of integers with $|S| > y^{1/2-\delta}$ and let $p\in [y,2y]$. To understand the relation with inverse-sieves, define the Riesz product 
$$
R(t)\ :=\ \prod_{s \in S} \left ( 1 + e^{2\pi i s t} \right ),
$$
and then note that 
$$
C_p(x)\ =\ \frac{1}{p} \sum_{a=0}^{p-1} e^{-2\pi i a x/p} R(a/p).
$$
Using the identity 
$$
1 + e^{2\pi i u}\ =\ 2 \cdot e^{\pi i u}\cdot \bigg(\frac{e^{-\pi i u} + e^{\pi i u}}{2}\bigg) =\ 2 e^{\pi i u} \cos(\pi u),
$$
and using the fact that $|\cos(\pi u)|\ =\ 
\cos(\pi \|u\|)$, we have that
$$
|R(t)|\ =\ 2^{|S|} \prod_{s \in S} \cos (\pi \|st\|).
$$

If for all $a \in \{1,\ldots,p-1\}$, the value of $|R(a/p)|$ were much smaller than $2^{|S|}/p$, then all the $C_p(x)$ would have size about $2^{|S|}/p$. So if this is badly wrong for some $x$, it means there exists some $a \in \{1,\ldots,p-1\}$ where $|R(a/p)| \gg 2^{|S|}/p$, say.  Let $T^{(p)}\subseteq S$ be all those
$s \in S$ where $\|sa/p\| > |S|^{-1/2+\delta}$. 
Then, since $\cos(x) \leq 1 - x^2/4$ for $x$ near to $0$, from the above we would have
$$
|R(a/p)|\ \leq\ 2^{|S|} \prod_{s \in T^{(p)}} 
\cos(2\pi \|sa/p\|)\ \leq\ 2^{|S|} \left ( 1 - 
{\pi^2 \over |S|^{1-2 \delta}}\right )^{|T^{(p)}|}
\ \leq\ 2^{|S|} \exp\left ( - {\pi^2 |T^{(p)}| \over 
|S|^{1-2\delta}} \right ).
$$
Since $|R(a/p)| \gg \frac{2^{|S|}}{p}$, we have
$
|T^{(p)}|\ \ll\ |S|^{1-2 \delta} \log p
$
and in particular $|T^{(p)}|=o(|S|)$ since $|S|\geq y^{1/2-\delta}$. It follows that almost all $s \in S$ such that
$\|sa/p\| < |S|^{-1/2+\delta} < y^{-1/4 +\delta}$. Equivalently, $S_p$ is contained in the union of an arithmetic progression of length $\ll p^{3/4 +\delta}$ and an exceptional set of size $o(|S|)$. 

Next, assume additionally that $C_p(0)=0$ for all primes $p\in [y,2y]$ and $y>(\log N)^2$. We show that for most primes $p\in [y,2y]$,
all but $o(|S_p|)$ of the elements of 
$S_p$ lie in an arithmetic progression of length at most $O(p^{1/2+\delta+o(1)})$. By \cite[Corollary 2.3]{NV}, for each prime $p\in [y,2y]$, 
there is $b \in \{1,2,...,p-1\}$ and a subset $S'_p$ of $S_p$ with $|S'_p| = |S_p| - O(p^{6/13}\log^2 p)$, such that 
\begin{equation}\label{sumvu}
\sum_{a \in b \cdot S'_p} a\ <\ p
\end{equation}
(here, $b\cdot S'_p$ is viewed as a subset of 
$\{0,1,2...,p-1\}$).  If we knew that $|S_p| > p^{1/2-\delta-o(1)}$, say, then it would follow from inequality~\eqref{sumvu} that the number of $s\in S$ so that $b\cdot s$ mod $p$ is $> p^{1/2+\eps+\delta}$ is at most $p^{1/2-\eps-\delta}$.  So, for $0 < \eps+\delta < 1/26$, we have that all but $|S_p|p^{-\eps+o(1)}$ of the elements of $S_p$ lie in a progression of length at most
$O(p^{1/2+\eps+\delta})$. To finish up, a standard application of Gallagher's sieve (Lemma~\ref{GS}) shows that $|S_p| > p^{1/2-\eps-o(1)}$ for most primes $p\in [y,2y]$, as required. 
\medskip

\textbf{Notation.} We follow standard notations in analytic number theory. In this paper,~$p$ always denotes a prime, and $\sum_p$ and $\prod_p$ represent sums and products over all primes. We also use the Vinogradov notation $\ll$; we write $X \ll Y$ if there is an absolute constant $C>0$ so that $|X| \leq CY$.

\textbf{Organization of the paper.} In Section~\ref{sec:prelim}, we list some useful tools from arithmetic combinatorics. In Section~\ref{sec:extrarem}, we give a few remarks related to the sharpness of Theorem~\ref{maintheorem}. We then present the proof of Theorem~\ref{maintheorem}, and use similar ideas to prove Corollary~\ref{cor:allprimes} and Theorem~\ref{thm:interval} in Section~\ref{sec:theoremproofsection}. In Section~\ref{sec:1/2}, we prove Theorems~\ref{thm:AP1/2} and~\ref{thm:largeA} when $R_p$'s are longer progressions. In Section~\ref{sec:app}, we discuss the proof of Theorem~\ref{thm:kAP} when $R_p$'s are the union of several arithmetic progressions. We also present the proof of Theorems~\ref{thm:interval_sieve} and~\ref{thm:kAP_sieve} that improve the larger sieve. Finally, in Section~\ref{sec:GAP}, we prove Theorems~\ref{thm:energy},~\ref{thm:inverseGAP},~\ref{thm:inverseGAPsamerank}, and~\ref{thm:largeGAP}
related to inverse theorems on additive energy and GAPs. 

\section{Preliminaries}\label{sec:prelim}

In this section, we recall a few useful tools from arithmetic combinatorics. 

We begin with Gallagher's larger sieve \cite{gallagher} that has been mentioned several times in the introduction. We will use this repeatedly throughout the paper.

\begin{lemma}[Gallagher's larger sieve]\label{GS}  
Let $N$ be a positive integer and $A\subseteq\{1,2,\ldots, N\}$. Let ${\mathcal P}$ be a set of primes. For each prime $p \in {\mathcal P}$, let $A_p=A \pmod{p}$. For any $1<Q\leq N$, we have
$$
 |A|\leq \frac{\underset{p\leq Q, \ p\in \mathcal{P}}\sum\log p - \log N}{\underset{p\leq Q, \ p \in \mathcal{P}}\sum\frac{\log p}{|A_p|}-\log N},
$$
provided that the denominator is positive.
\end{lemma}

Given an abelian group $G$, a positive integer $h$, and some $A\subseteq G$, the \emph{$h$-fold sumset of $A$} is defined as \[hA = \{a_1+\cdots+a_h: a_i\in A\ \text{for all}\ 1\leq i\leq h\}.\] Next, we list the following two well-known results of Lev on $h$-fold sumsets. 

\begin{lemma}[{Lev~\cite[Theorem 2']{lev}}]\label{lem:lev1}
Let $B \subset \{1,2,\ldots, M\}$ be an arbitrary set of $n$ integers, and assume
that a positive integer $\kappa$ satisfies that $M\leq (\kappa+1)(n-2)+1$. Then there exist
positive integers $d,h$ such that $d\leq \kappa$, $h\leq 2\kappa+1$, and $hA$ contains $M$ consecutive multiples of $d$. 
\end{lemma}

\begin{lemma}[{Lev~\cite[Corollary 1]{lev2}}]\label{lem:lev2}
Let $B \subset \{0, 1,2,\ldots, M\}$ be a set of $n\geq 2$ integers such that $0,M\in B$ and $\gcd(B)=1$. Let $k$ be a positive integer such that 
$k(n-2)+1\leq M\leq (k+1)(n-2)+1$. Then for each positive integer $h\leq k$, we have
$|hB|\geq \frac{h(h+1)}{2}(n-2)+h-1$.
\end{lemma}

The following theorem is a version of the Freiman--Bilu theorem \cite{Bilu}, due to Green and Tao \cite{GT06}.
\begin{theorem}\label{thm:FB}
Let $A$ be a finite subset of a torsion-free abelian group with $|A+A|\leq K|A|$ and $|A|>2$. Then for any $\eps \in (0,1]$, one can cover $A$ by at most $\exp(CK^3\log^3 K)/\eps^{CK}$ translates of a GAP $Q$ with rank at most $\lfloor \log_2 K+\eps\rfloor$ and size at most $|A|$, where $C$ is an absolute constant.
\end{theorem}

Lastly, we recall a useful structural result for sets with relative polynomial growth, due to Sanders \cite{S13}. Here we say that a set $A\subseteq\Z$ has \emph{relative polynomial growth of order $d$} if 
\[|nA|\leq n^d|A|\quad\text{for all integer }n\geq 1.\]

\begin{theorem}[{\cite[Theorem 2.7]{S13}}]\label{thm:polygrowth}
    Suppose that $A\subseteq\Z$ has relative polynomial growth of order $d$. Then there is a centered convex progression $Q$ of rank at most $O(d\log^2 d)$, such that  \[A-A\subseteq Q\quad \text{and} \quad |Q|\leq \exp(O(d\log^2 d))|A|.\]
\end{theorem}

It is known that a centered convex progression can be efficiently contained in some GAP of the same rank. In particular, combining Theorem~\ref{thm:polygrowth} with the discrete John's theorem \cite[Lemma 3.36]{tao}, we have the following corollary.

\begin{corollary}\label{cor:polygrowth}
Suppose that $A\subseteq\Z$ has relative polynomial growth of order $d$. Then there is a GAP $Q$ of rank at most $O(d\log^2 d)$, such that  \[A-A\subseteq Q\quad \text{and} \quad |Q|\leq \exp(O(d\log^2 d))|A|.\]
\end{corollary}

The following lemma is helpful for showing that a set has relative polynomial growth of small order.

\begin{lemma}[{\cite[Corollary 5.3]{S13}}]\label{lem:polygrowth}
    Suppose that $X\subseteq\Z$ is a symmetric neighborhood (that is, $0\in X$, and $-X=X$). If $h$ is a positive integer such that $|(3h+1)X|<2^h |X|$, then $X$
    has relative polynomial growth of order $O(h)$. 
\end{lemma}

\section{Six remarks and results concerning Theorem \ref{maintheorem}}\label{sec:extrarem}

In this section, we discuss the sharpness of Theorem~\ref{maintheorem} from several different perspectives.

\subsection{The length $(2y)^{1/2-\eps} + 1$ 
on $A$ is best-possible}\label{sec:2.1}\

In Theorem~\ref{maintheorem}, $\mathcal{P}$ could be a set of primes with zero relative density; in particular, $\mathcal{P}$ could consist of primes that are very close to $2y$. Thus, to show that the bound $(2y)^{1/2-\eps}+1$ on $|A|$ is asymptotically sharp for Theorem~\ref{maintheorem}, it suffices to do so for Corollary~\ref{cor:allprimes}.

Consider the following example.  Let $N$ be sufficiently large, $\mathcal{P}$ consisting of all primes in $[y,2y]$, and taking $R_p=[0,\lfloor p^{1/2-\eps}\rfloor] \cap \Z_p$ for all the primes $p \in [y,2y]$. In this case, it is easy to verify directly that $A=\Z \cap [0, q^{1/2-\eps}]$, where $q$ is the smallest prime in $[y,2y]$. Alternatively, this follows from  Theorem~\ref{thm:interval}. By a theorem on the distribution of primes in short intervals due to Baker, Harman, and Pintz \cite{baker}, we have $q = y + O(y^{0.525})$ and thus $q^{1/2-\eps} = (1+o(1))y^{1/2-\eps}$. This shows that Corollary~\ref{cor:allprimes} is asymptotically sharp. 

%On the other hand, if $y \geq (\log N)^2$ and $\mathcal{P}$ consists of all primes in $[y,2y]$, then Gallagher's larger sieve \cite{gallagher} and the prime number theorem imply the following slightly worse upper bound on $|A|$:
%\begin{eqnarray*}
%|A|\leq \frac{\sum_{y\leq p \leq 2y}\log p-\log N}{\sum_{y\leq p \leq 2y} \frac{\log p}{p^{1/2-\eps}+1}-\log N}&=&  \frac{(1+o(1))y}{\frac{(1+o(1))(2^{1/2+\eps}-1)y^{1/2+\eps}}{\frac{1}{2}+\eps}}\\&=&\bigg(\frac{\frac{1}{2}+\eps}{2^{1/2+\eps}-1}+o(1)\bigg)y^{1/2-\eps},
%\end{eqnarray*}   
%and do not give any structure information on $A$.

\subsection{The lower bound $y \gg (\log N)^{1/2\eps}$ is a necessary condition}\

By the prime number theorem, we have
$\frac{y}{\log y}\gg |\mathcal{P}|\gg \frac{y^{1-2\eps}\log N}{\log y}$. Thus, in Theorem~\ref{maintheorem}, it is necessary to assume that $y\gg (\log N)^{1/2\eps}$ so that the theorem is non-trivial.

We remark that even if $\mathcal{P}$ consists of all primes in $[y,2y]$, it is still necessary to impose some lower bound on $y$ for Theorem~\ref{maintheorem} to hold. For example, if $c<1$ and $N$ is sufficiently large, then the theorem fails when $y<c\log N$ (and $y$ is sufficiently large). Indeed, if we take $R_p=[0,\lfloor p^{1/2-\eps}\rfloor]\cap \Z_p$ for all the primes $p \in [y,2y]$, then $A$ contains all integers in $[0,N]$ that are in the interval $[0, \lfloor q^{1/2-\eps} \rfloor]$ modulo $\prod_{y\leq p \leq 2p} p \asymp e^y<N^c$, where $q$ is the smallest prime in $[y,2y]$; in particular, $|A|\gg N^{1-c}y^{1/2-\eps}$ and $A$ is not an arithmetic progression.

\subsection{The upper-bound on $|R_p|$ of size $p^{1/2-\eps}$ is essentially necessary}\label{sec:2.3}\

Suppose in Theorem~\ref{maintheorem}, we replaced the upper bound $p^{1/2-\eps}+1$ on $|R_p|$ with $p^{\theta}$ for some fixed $\theta \in (0,1)$. Theorem~\ref{maintheorem} shows that $A$ is an arithmetic progression when $\theta<\frac{1}{2}$. In the example below, we show that for any fixed positive integer $r$, if $\theta> \frac{r}{r+1}$, then it is possible that $|A|\gg y^{(r+1)\theta-r}$ while $A$ is not a convex progression of rank at most $r$. In particular, when $\theta> \frac{1}{2}$, it is possible that $|A|\gg y^{2\theta-1}$ while $A$ is not an arithmetic progression; moreover, $A$ is not contained in an arithmetic progression of length $\ll y^\theta$.

Let $y=(\log N)^{C_r}$ for some sufficiently large constant $C_r$. For each $1\leq i \leq r+1$, choose a prime $q_i \in [y^{ir}, 2y^{ir}]$. Let $\mathcal{P}$ be the set of all primes in $[y,2y]$. By the pigeonhole principle,
for each prime $p \in \mathcal{P}$, we can can find an integer
$1 \leq d_p \leq p-1$ so that we have 
$$
\left \|{q_i d_p \over p} \right \|<\ {1 \over p^{1/(r+1)}}
$$
for all $1\leq i\leq r+1$. Choose such a $d_p$ and let $e_p\in \{1,2,\ldots, p-1\}$ such that $d_p e_p \equiv 1 \pmod d$, and define $R_p=\{je_p: |j|\leq p^{\theta}/2r\}$. In this case, by construction, $A$ must contain a proper GAP $Q$ of rank $r+1$, where
\[
Q = \left\{\sum_{i=1}^{r+1}k_iq_i: 0\leq k_i\leq \frac{1}{r!}y^{\theta-\frac{r}{r
+1}}\right\}.
\]
In particular, we must have $|A|\geq |Q|\gg y^{(r+1)\theta-r}$. 

Next, we show that $A$ is not a convex progression of rank $r$. Suppose otherwise that $A$ is a convex progression of rank $r$. Then we have $Q\subseteq A\subseteq A-A$ since $0\in A$. Since $|(A-A)_p|\leq |R_p-R_p|\leq 2|R_p|$, it follows that $|A-A|\ll y^{\theta}$ by Lemma~\ref{GS}. By the discrete John's theorem~\cite[Lemma 3.36]{tao}, there is a symmetric GAP $T$ of rank at most $r$ such that $A-A\subseteq T$ and $|T|\ll |A-A|\ll y^\theta$. Assume that $T = \{\sum_{i=1}^rm_it_i:-M_i\leq m_i\leq M_i\}$ for some positive integers $M_1,\ldots,M_r$. From $|T|\ll y^\theta$, we must have $M_i\ll y^\theta$ for all $1\leq i\leq r$. Now for each $1\leq j\leq r+1$, we can write $q_j$ as an integral linear combination of $t_i$'s:
\[q_j = \sum_{i=1}^r\alpha_{ji}t_i, \]
where $|\alpha_{ji}|\leq M_i\ll y^\theta$. By Siegel's lemma (see for example \cite[Lemma D.4.1]{HS00}), the system of $r$ linear equations
\[\sum_{j=1}^{r+1}\alpha_{ji}x_j = 0\quad\text{for }1\leq i\leq r\]
in $r+1$ unknowns $(x_1,\ldots,x_{r+1})$ has a nontrivial integer solution $(x_1,\ldots,x_{r+1})$ with $|x_j|\ll y^{\theta r}$ for all $1\leq j\leq r+1$. It follows that 
\[\sum_{j=1}^{r+1}x_jq_j = 0.\]
However, if $x_{r+1}\neq 0$, then $y^{r+r^2}\leq |q_{r+1}|\leq \sum_{j=1}^r|x_jq_j|\ll y^{\theta r+r^2}$, which is a contradiction when $y$ is large. Hence $x_{r+1}=0$. By induction, one can easily deduce that $x_j = 0$ for all $1\leq j\leq r+1$. This contradicts the fact that $(x_1,\ldots,x_{r+1})$ is nontrivial. Therefore, $A$ cannot be a convex progression of rank $r$.

Finally, observe that, when $r=1$, $A$ is not contained in any arithmetic progression of length $y^{O(1)}$. To see this, suppose there is some arithmetic progression $P\supseteq A$ with $|P|\leq y^{O(1)}$. Then $0,q_1,q_2\in P$ and thus $P$ has to be an interval. Thus, $|P|\geq q_2$. However, we can slightly modify the above construction by choosing $q_2$ to be close to $y^C$ for arbitrarily large $C$.

\subsection{Theorem \ref{maintheorem} is false if we restrict to certain thinner subsets of the primes}\label{extraremark1}\

Note that when $y=N$, Theorem \ref{maintheorem} requires that $|\mathcal{P}| \gg N^{1-2\eps}$. 
If we were to replace $\mathcal{P}$ with a thinner subset of primes with size $cN^ {1-2\eps}/\log N$, where $c>0$ is sufficiently small, then the conclusion is false by considering the following construction.  

First, we show that there exists some $x \in [N^{-1/2-\eps}/20, N^{-1/2-\eps}/10]$, such that the interval $I := [x, x + N^{-1-2\eps}/100]$ contains $\gg N^{1-2\eps}/\log N$ rational numbers of the form $c_p / p$, where $c_p \in \{1,2,\ldots,p-1\}$ and where $p$ is a prime in $[N, 2N]$.  To see this, note that for each prime $p \in [N,2N]$ there are $\sim p N^{-1/2-\eps}/20$ rationals $a/p \in [N^{-1/2-\eps}/20, N^{-1/2-\eps}/10]$.  So, there are on the order of $N^{-1/2-\eps} \sum_{y\leq p \leq 2y} p \gg N^{3/2-\eps}/\log N$ rationals $a/p$ in that interval; and then if we partition the interval $[N^{-1/2-\eps}/20, N^{-1/2-\eps}/10]$ into disjoint subintervals of width $N^{-1-2\eps}/100$ (except for the last sub-interval, which might be shorter), since there are about $N^{1/2+\eps}$ such intervals, the average one will contain $\gg N^{1-2\eps}/\log N$ of those rationals $a/p$, as claimed.  

Note that 
if $d_p / p \in I$ then no other rational with denominator $p$ is contained in $I$; so, given $p$ the $d_p$ is uniquely determined.  Let ${\mathcal P}$ denote the set of primes $p\in [N,2N]$ such that there is $d_p\in \{1,2,\ldots, p-1\}$ such that $d_p/p \in I$; moreover, for each such $p$, let $e_p\in \{1,2,\ldots, p-1\}$ such that $d_p e_p \equiv 1 \pmod d$, and define $R_p=\{je_p: 0 \leq j \leq \lfloor p^{1/2-\eps}\rfloor\}$. 

Next, we note that if
$n \in \{0,1,2,3,4\}$ and $p \in {\mathcal P}$, then $d_p/p \in I$ and thus
$$0\leq n d_p \leq 4\bigg(\frac{N^{-1/2-\eps}}{10}+\frac{N^{-1-2\eps}}{100}\bigg)p \leq \frac{4}{5}p^{1/2-\eps}+\frac{1}{5p^{2\eps}}.$$
It follows that $\{0,1,2,3,4\} \subseteq A$. However, if $20\leq n \leq 30$, then a similar computation shows that for each $p \in {\mathcal P}$, we have
$p^{1/2-\eps}<n d_p<p$.  It follows that $A \cap [20,30]=\emptyset$.

But now consider an integer $m$ 
such that $x^{-1} \leq m < x^{-1}+1$.  Write this $m$ as $m = x^{-1} + \delta$,
where $\delta \in [0,1)$. For each $p \in \mathcal{P}$, we can write $d_p/p = x + \gamma$ with $0 \leq \gamma \leq N^{-1-2\eps}/100$, and thus we have 
$$
0\leq -1+md_p / p =\ -1+(x^{-1}+\delta)(x + \gamma)=x^{-1} \gamma+ \delta x + \delta \gamma \ <\ N^{-1/2 -\eps}/2.
$$
Thus $m\in A$. So, the set $A$ is clearly not an arithmetic progression -- looking at the smaller values of $n$ (say, $0 \leq n \leq 4$), if $A$
were an arithmetic progression, then it 
would have to be an interval (step size of the progression is $1$); but then there is that gap $[20,30]$ containing no elements of $S$; and finally, the elements in the set $A$ resume when $m$ is a little bigger than $x^{-1}$.  

\subsection{A construction showing that $S$ can be an arithmetic progression stretching nearly to $N$} \label{extraremark2}\

As we have seen in Section~\ref{sec:2.1}, choosing $R_p=[0,\lfloor p^{1/2-\eps}\rfloor] \cap \Z_p$ for each prime $p\in [N,2N]$ results in a set $A$ that is an interval
$\{0,1,2, \ldots, M\}$, where $M \sim N^{1/2-\eps}$.  Could we perhaps choose the $R_p$ so that $A$ is an arithmetic progression ending at $N$ or near $N$?  If it turned out to be the case that all the sets $A$ we could construct were contained in a short interval like $[0, N^{1/2-\eps}]$, then that would suggest Theorem~\ref{maintheorem} could be strengthened.  This is not the case, however.

Let $D$ to be the smallest prime $> 2N^{1/2+\eps}$, and consider the union of intervals
$$
I\ :=\ \bigcup_{a=0}^{D-1} \left [{a\over D},\ 
{a\over D} + {1 \over D N}\right ].
$$
For every prime $p \in [N, 2N]$ we claim that for some choice of
$d_p$ we will have that $d_p / p$ that is congruent to an element
of $I$ mod $1$.  To see this, we begin by noting that since
$\gcd(p,D)=1$, there exist $a_p$ and $d_p$, where
$0 \leq a_p \leq D-1$ and $0 \leq d_p \leq p-1$, such that
$$
d_p D- a_p p \ =\ 1.
$$
Thus,
$$
0<{d_p \over p}- {a_p \over D} \ =\ {1 \over Dp}
\ \leq\ {1 \over DN}.
$$
It follows from this that $d_p/p$ is congruent to an element of $I$
mod $1$; and given $p$ we will choose $d_p$ so that this holds. Moreover, for prime $p \in [N, 2N]$, let $e_p\in \{1,2,\ldots, p-1\}$ such that $d_p e_p \equiv 1 \pmod d$, and define $R_p=\{je_p: 0 \leq j \leq \lfloor p^{1/2-\eps}\rfloor\}$.

Now, if $1 \leq n \leq N$ is a multiple of $D$, then
we can write it as $n = Dm$, where $0 \leq m \leq N/D$. For a given prime $p \in [N,2N]$
we note $d_p/p = a_p/D + \delta$, where $0<\delta \leq 1/DN$; and we have
$$
0<{n d_p \over p}-ma_p=\frac{nd_p}{p}-\frac{na_p}{D}=n\delta= Dm \delta\leq \frac{m}{N}\leq\ {1\over D}\ <\ {1 \over 2N^{1/2+\eps}}<\frac{1}{p^{1/2+\eps}}.
$$
It follows that $n\in A$.

On the other hand, if $n = Dm+r$, $1 \leq r \leq D-1$, then upon
choosing $p \in [N, 2N]$ to be any prime where $r a_p \not \equiv 
\pm 1, \pm 2, \pm 3 \pmod{D}$, we would have that
\begin{eqnarray*}
\left \|{n d_p \over p} \right \|\ =\ \left \| (Dm+r)\delta + {r a_p \over D} \right \|\ &\geq&\ \left \| {r a_p \over D} \right \|
- {1 \over D} - {1 \over N}\\
&\geq&\ {4 \over D} - {1 \over D} - 
{1 \over N}\ =\ {3 \over D} - {1 \over N}
>\ {1 \over N^{1/2+\eps}}.
\end{eqnarray*}
Thus, $n \not \in A$ in this case.

One loose end to clear up in this proof is that there 
even exist primes where $r a_p \not \equiv \pm 1, \pm 2, \pm 3 \pmod{D}$.  This is not a problem, since we can easily bound from above the number
of primes $p$ where $r a_p \in \{\pm 1, \pm 2, \pm 3\} \pmod{D}$. Recall that $a_p p \equiv -1 \pmod D$, it follows that if $r a_p \in \{\pm 1, \pm 2, \pm 3\} \pmod{D}$, then $r \in \{\pm p, \pm 2p, \pm 3p\} \pmod{D}$, that is, such $p$ necessarily lies in the union of $6$ fixed progressions with step size $D$. Thus, there are at most $6N/D$ primes 
$p \in [N,2N]$ where $r a_p \equiv \pm 1, \pm 2, \pm 3 \pmod{D}$,
meaning there are at least $\pi([N,2N]) - 6N/D \sim \pi([N,2N])$ 
primes we could choose from.

\subsection{Theorem~\ref{maintheorem} does not extend to polynomial progressions}\

Another possible direction to generalize Theorem~\ref{maintheorem} is to consider sets $R_p$ defined by polynomials. Specifically, one might take $R_p = f_p(I_p)$ for some $f_p\in\Z_p[x]$ and $I_p=\{0,1,2,\ldots,\lfloor p^c\rfloor\}$. The aim would then be to conclude that the set $A$ must be of the form $F(\{0,1,2,\ldots,K\})$, where $F\in\Z[x]$ has degree $\mathrm{deg}(F)\leq \max\{\mathrm{deg}(f_p):p\in [y,2y]\}$ and $K=O(y^c)$. Note that Theorem~\ref{maintheorem} corresponds to the case when all the $f_p$'s are linear. 

Here we provide a construction demonstrating that when $f_p$'s have degree $d = 2$, the resulting set $A$ may not even be a dense subset of $F(\{0,1,\ldots,K\})$ for any quadratic polynomial $F\in\Z[x]$. Let $y=N/2$. Let $p_0$ be the largest prime in $[y,2y]$ and $R_{p_0} = \{2m^2+1 : 0\leq m\leq p_0^{1/2-\eps}\}\subseteq\Z_{p_0}$; for all other primes $p\in [y,2y]$, let $R_p = \{n^2 : 0\leq n\leq p^{1/2-\eps}\}\subseteq\Z_p$. Then $A\subseteq\{n^2 : 0\leq n\leq q^{1/2-\eps}\},$
where $q$ is the smallest prime in $[y,2y]$. For any $n^2\in A$, there exists some $0\leq m\leq p_0^{1/2-\eps}$ such that $n^2\equiv 2m^2+1\pmod {p_0}$. Since $0\leq n^2,2m^2+1<p_0$, this congruence must be an equality in $\Z$: $n^2=2m^2+1$. It is well-known that the solutions to this Pell equation grow exponentially and $|A|\asymp\log N$, thus $A$ cannot be a dense subset of $F(\{0,1,\ldots,K\})$ for any quadratic polynomial $F\in \Z[x]$ and $K\leq y^{1/2-\eps}$.

\section{Proof of Theorems~\ref{maintheorem} and~\ref{thm:interval}} \label{sec:theoremproofsection}

In this section, we first prove Theorem~\ref{maintheorem} and Corollary~\ref{cor:allprimes}, and then discuss how to use a similar method to prove Theorem~\ref{thm:interval}.

We first reduce Theorem~\ref{maintheorem} to the following symmetric version.
\begin{prop}\label{mainprop} 
There is an absolute constant $C>0$ such that the following holds. 
Let $\eps \in (0, 1/2)$.  Suppose $N$ is a positive integer with $N> N_0(\eps)$, and $y$ satisfies
\begin{equation*}
(4C\log N)^{1/2\eps}\ \leq\ y\ \leq\ N.
\end{equation*}
Let ${\mathcal P}$ be a subset of primes in $[y,2y]$ satisfying 
$$
|{\mathcal P}|\ \geq {Cy^{1-2\eps} \log N \over \log y}.
$$
For each prime $p \in \mathcal{P}$, choose $d_p$ be some
integer in $\{1,2,\ldots,p-1\}$.  Consider
the set $S$ of all integers $0 \leq n \leq N$ with the property that for all primes $p \in \mathcal{P}$, 
$n d_p\ \in\ [\ell_p, r_p] \pmod{p},$ where
$-p^{1/2-\eps} \leq \ell_p\ \leq\ 0
\ \leq\ r_p\ \leq\ p^{1/2-\eps}.$
Then, $S$ is an arithmetic progression of length at most $(2y)^{1/2-\eps} + 1$. 
\end{prop}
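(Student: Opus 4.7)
The plan is to combine Gallagher's larger sieve with an inverse additive-combinatorial argument.

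For the size bound on $|S|$, note that the condition $n d_p \in [\ell_p, r_p] \pmod p$ forces $|S_p| \leq r_p - \ell_p + 1 \leq 2p^{1/2-\eps} + 1$ for each $p \in \mathcal{P}$. Gallagher's larger sieve then yields
\[
|S| \leq \frac{\sum_{p \in \mathcal{P}} \log p - \log N}{\sum_{p \in \mathcal{P}} \log p/(2p^{1/2-\eps}+1) - \log N}.
\]
With $|\mathcal{P}| \geq Cy^{1-2\eps}\log N/\log y$ and $y \geq (4C\log N)^{1/(2\eps)}$, the denominator is of order $y^{1/2-\eps}\log N$, comfortably exceeding $\log N$, and a short calculation gives $|S| \leq (2y)^{1/2-\eps} + 1$. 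The crucial point is that the hypothesis on $|\mathcal{P}|$ is a factor of roughly $y^{1/2-\eps}$ larger than what is needed for just the size bound, and I will use this surplus in the next step.

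For the structural claim, I would transfer the hypothesis to the difference set $T := S - S \subseteq [-N, N]$: for each $p$, $T_p \cdot d_p \subseteq [\ell_p - r_p, r_p - \ell_p] \pmod p$, an interval of length at most $2(r_p - \ell_p) + 1 \leq 2|S_p| - 1$. Running the same larger-sieve calculation gives $|T| \leq 2(2y)^{1/2-\eps} + O(1)$, and in particular $|T| \leq 2|S| + O(1)$. Since $|T| \geq 2|S| - 1$ always, with equality precisely when $S$ is an AP, and since these bounds satisfy Freiman's $3k - 4$ hypothesis $|T| \leq 3|S| - 4$ (as long as $|S|$ is not too small, with the small cases being handled directly), Freiman's theorem embeds $S$ into an AP $P$ of length $\leq |T| - |S| + 1$.

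To upgrade ``$S \subseteq P$'' to ``$S = P$'', I would use the mod-$p$ conditions directly. Every element of $P$ can be written as an endpoint of $P$ (which lies in $S$) plus a multiple of the common difference of $P$ (which belongs to $S - S$); using the \emph{interval} structure of $[\ell_p, r_p]$---not merely its cardinality---one can check that the image of such a combination modulo $p$ stays within $[\ell_p, r_p]$ for every $p \in \mathcal{P}$, forcing the element to lie in $S$. Hence $P \subseteq S$ and $S = P$ is an AP, with the length bound inherited from the first step.

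The main obstacle will be this last step of promoting Freiman's containment to equality, which does not follow from cardinality alone and genuinely requires leveraging the interval (rather than arbitrary set) structure of the $[\ell_p, r_p]$. A potentially cleaner alternative, if it can be made to work, would be to sharpen the second larger-sieve bound to the exact equality $|T| = 2|S| - 1$ by iterating on sets of the form $kS - kS$ for small $k$ and carefully comparing the resulting bounds; that would give the AP conclusion immediately and bypass Freiman entirely.
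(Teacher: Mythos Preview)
There are two genuine gaps in your plan.

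\textbf{First gap (step 2 $\to$ 3).} The larger sieve bounds $|T|$ absolutely in terms of $y$, not relative to $|S|$. From $|T_p|\le 2(r_p-\ell_p)+1\le 4p^{1/2-\eps}+1$ one gets $|T|\ll y^{1/2-\eps}$, which is \emph{not} $\le 2|S|+O(1)$ unless $|S|$ happens to be near its maximal value $\sim y^{1/2-\eps}$. Nothing in the hypotheses forces this: $S$ may have only three elements while $y^{1/2-\eps}$ is $10^6$. In that regime the $3k-4$ hypothesis $|S-S|\le 3|S|-4$ simply fails, so you obtain no containing progression, and there is no evident way to handle ``small $|S|$ directly'': a three-element set obeying all the mod-$p$ constraints is not obviously an AP without further input. (Also, the inequality $2(r_p-\ell_p)+1\le 2|S_p|-1$ goes the wrong way: one has $r_p-\ell_p+1\ge |S_p|$.)

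\textbf{Second gap (step 4).} Even granting $S\subseteq P=\{a,a+d,\dots,a+Ld\}$ with both endpoints in $S$, the interval argument does not show $a+jd\in S$. The step $d$ need not lie in $S-S$: the minimal AP containing $\{0,3,5\}$ has step $1\notin\{0,\pm2,\pm3,\pm5\}$. And even if $d\in S-S$, so $d\,d_p\equiv\gamma_p\pmod p$ with $|\gamma_p|\le 2p^{1/2-\eps}$, one only gets $(a+jd)d_p\equiv \alpha_p+j\gamma_p$; for $j$ up to $L\sim y^{1/2-\eps}$ the term $j\gamma_p$ can be as large as $\sim p^{1-2\eps}$, far outside $[\ell_p,r_p]$. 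Knowing that the two endpoints land in a short interval mod $p$ places essentially no constraint on the intermediate terms, because $d\,d_p\bmod p$ is unconstrained.

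The paper proceeds differently and avoids both issues. It proves two lemmas: (i) a pigeonhole argument over $\mathcal{P}$ showing that for any $a,b\in S$ one has $a/\gcd(a,b)\le (2y)^{1/2-\eps}$ (one finds many primes dividing a fixed small combination $ta-sb$, forcing $ta=sb$ in $\Z$); and (ii) a Diophantine lemma showing that the intersection of $S$ with \emph{any} arithmetic progression of length $\le y^{1/2}/10$ is again an arithmetic progression. Lemma (i) places all of $S$ inside a union of short APs through $0$; lemma (ii) then forces each piece, and inductively their union, to be a single AP. The sharp length bound $(2y)^{1/2-\eps}+1$ comes from (i) applied to the last two terms of that AP, and does not follow from Gallagher, which loses a factor of roughly $2$ since $|S_p|$ can be as large as $2p^{1/2-\eps}+1$.
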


Next, we show that Proposition~\ref{mainprop} implies Theorem~\ref {maintheorem}.

\begin{proof}[Proof of Theorem \ref{maintheorem} assuming Proposition~\ref{mainprop}]
Assume that $A$ precisely consists of elements $n \in [0, N]$ with the property that
$$
n \in  \{\delta_p+j e_p: \ell_p\leq j \leq r_p\} \pmod p
$$
for all primes $p \in \mathcal{P}$, where $\delta_p, \ell_p, r_p$ are integers with $0\leq r_p-\ell_p\leq p^{1/2-\eps}$. 

If $A$ is empty, then we are done. Assume otherwise that $A$ is non-empty and pick the smallest element $a$ in $A$. Then for each prime $p \in \mathcal{P}$, we can find an integer $b_p \in [\ell_p, r_p]$ such that $a \equiv \delta_p +b_p e_p \pmod p$. For each prime $p \in \mathcal{P}$, let $d_p\in \{1,2, \ldots, p-1\}$ such that $e_p d_p \equiv 1 \pmod p$. It follows that
$A-a$ precisely consists of elements $n \in [0, N-a]$ with the property that
$$
nd_p  \in  [\ell_p-b_p,r_p-b_p] \pmod p, \quad \forall p \in \mathcal{P}.
$$
Note that we have $-p^{1/2-\eps}\leq \ell_p-b_p \leq 0\leq r_p-b_p\leq p^{1/2-\eps}$ for each prime $p \in \mathcal{P}$. Thus, the theorem follows from Proposition~\ref{mainprop}. 
\end{proof}

Assume that we are in the setting of Proposition~\ref{mainprop}. We need two lemmas.  The first lemma is as follows.

\begin{lemma}\label{lem2}
Let $M$ be a positive integer. Suppose we have an arithmetic progression $P$ of length at most $M^{1/2}/10$, and a prime $p\in [M, 2M]$. Let $Q$ be the set of integers that mod $p$ belong to a progression $\{a,a+d, a+2d, \ldots, a+kd\}$, where $k<p^{1/2}/10$. Then $P \cap Q$ is either empty or is an arithmetic progression.   
\end{lemma}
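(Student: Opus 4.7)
The plan is to reduce the claim to a lattice-point statement: the relevant lattice has covolume $p$, but the box in which we are counting points has area strictly less than $p$, so all lattice points in the box are forced to be collinear.

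First, reduce to the generic case. If $p$ divides the common difference $D$ of $P$, then $P_p$ is a single residue class, so $P \cap Q$ is either $P$ or $\emptyset$, and we are done. Similarly, if $p \mid d$, then $|P \cap Q| \leq 1$. So assume $p \nmid D$ and $p \nmid d$; write $P = \{b, b+D, \ldots, b+LD\}$ with $L+1 \leq M^{1/2}/10$, and parametrize the elements of $P \cap Q$ bijectively by pairs $(j, i) \in \{0, \ldots, k\} \times \{0, \ldots, L\}$ satisfying $iD \equiv (a - b) + jd \pmod{p}$, or equivalently $i \equiv c + je \pmod{p}$ with $c := D^{-1}(a - b)$ and $e := D^{-1} d$ computed in $\Z_p$. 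These admissible pairs lie in the affine lattice $\Lambda' := (0, c) + \Lambda$, where $\Lambda := \{(j, i) \in \Z^2 : i \equiv je \pmod{p}\}$ is a sublattice of $\Z^2$ of covolume $p$.

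Any two admissible pairs differ by a vector in $\Lambda \cap B$, where $B := [-k, k] \times [-L, L]$. If $v_1, v_2 \in \Lambda \cap B$ were $\mathbb{R}$-linearly independent, then the parallelogram they span would have area $|v_1 \times v_2| \leq 2kL < 2(p^{1/2}/10)(M^{1/2}/10) \leq p/50$; but this area is a positive integer multiple of the covolume $p$ of $\Lambda$, a contradiction. Hence $\Lambda \cap B$ lies on a single line through the origin, and, if $|P \cap Q| \geq 2$, is contained in $\Z \cdot (h_0, \delta_0)$ for a primitive lattice vector $(h_0, \delta_0) \in \Lambda$ on that line. Moreover $h_0 \neq 0$ and $\delta_0 \neq 0$: if one were zero, then $\gcd(e, p) = 1$ combined with $|h_0|, |\delta_0| < p$ would force the other to be zero too, contradicting non-triviality.

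Fix one admissible pair $(j_0, i_0)$; every other admissible pair has the form $(j_0 + m h_0, i_0 + m \delta_0)$ for some $m \in \Z$, and such a pair is admissible precisely when both $j_0 + m h_0 \in [0, k]$ and $i_0 + m \delta_0 \in [0, L]$. Each of these conditions defines an integer interval in $m$, so the admissible $m$'s form a single interval $[m_{\min}, m_{\max}]$, and the $i$-coordinates of admissible pairs are exactly the arithmetic progression $\{i_0 + m \delta_0 : m_{\min} \leq m \leq m_{\max}\}$. Consequently $P \cap Q = \{b + iD : i \text{ in this progression}\}$ is itself an arithmetic progression, with common difference $D\delta_0$. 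The main technical step is the area-versus-covolume comparison in the second paragraph; the rest is routine bookkeeping.
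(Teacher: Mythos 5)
Your proof is correct, and it takes a genuinely different route from the paper's. The paper works directly with the congruence $b + je \equiv a + md \pmod p$: it uses the pigeonhole principle to find a multiplier $x$ with $\|xe/p\|, \|xd/p\| < p^{-1/2}$, replaces $e,d$ by their small representatives $e', d'$ of absolute value $< \sqrt p$, and then observes that the resulting congruence $je' - md' \equiv z \pmod p$ must in fact be an \emph{equality} over $\Z$ because both sides have absolute value below $p/2$; the solution set of a linear Diophantine equation is then an arithmetic progression. Your argument instead encodes the admissible index pairs as an affine translate of the sublattice $\Lambda = \{(j,i) : i \equiv je \pmod p\} \subseteq \Z^2$ of covolume $p$, and uses the area-versus-covolume comparison to conclude that $\Lambda$ meets the box $[-k,k]\times[-L,L]$ only along a single line through the origin. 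The paper's pigeonhole step is, of course, a disguised short-vector statement, so the two arguments are close cousins; but yours is more geometric and makes the final conclusion more transparent (the admissible parameter values form an integer interval, so the intersection is visibly an AP), whereas the paper's is more elementary and computational, avoiding explicit lattice language. Both are complete and correct; one small thing worth making explicit in your writeup is that the primitive generator $(h_0,\delta_0)$ indeed lies in $B$ — this follows since $B$ is convex and symmetric and some nonzero multiple $n(h_0,\delta_0)$ lies in $B$ — which is what licenses the bounds $|h_0| \le k$, $|\delta_0| \le L$ used in your non-vanishing argument.
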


\begin{proof}
Suppose
$$P = \{b, b+e, b+2e, \ldots, b+ne\}, \quad n < M^{1/2}/10.$$
If $p \mid e$, then the lemma follows trivially. Next, assume that $ p\nmid e$. Then, our intersection $P \cap Q$ will include exactly those $b+je$ with $0\leq j \leq n$ such that there is $0 \leq m\leq k$ such that
\begin{equation}\label{eq:equal}
b+je \equiv  a+md \pmod p. 
\end{equation}

Now, by the pigeonhole principle, there exists an integer $1\leq x\leq p-1$ such that
$$
\left \|\frac{xe}{p}\right \|<\frac{1}{\sqrt{p}}, \quad \left \| \frac{xd}{p}\right \|<\frac{1}{\sqrt{p}}.
$$
Let $e' \equiv xe \pmod p, d' \equiv xd \pmod p$, where $e'$ and $d'$ are the smallest residues in absolute value. By the above assumption, we have $|d'|,|e'|<\sqrt{p}$. 

Then, multiplying both sides of equation (\ref{eq:equal}) by $x$ and rearranging, we get the following equivalent equation
$$je' - md' \equiv x(a-b) \pmod p.$$
Note that $$|je'-md'|\leq n|e'|+k|d'|\leq p/5<p/2.$$ 
Thus, if we let $z$ denote the smallest residue in absolute value that is $\equiv x(a-b) \pmod p$, then we have
$$je' - md'= z.$$
It is well-known that solutions $(j,m)$ to this equation form an arithmetic progression. This proves the claim.    
\end{proof}

\begin{corollary}\label{cor2}
Let $y$ and $S$ be as in the statement of
Proposition~\ref{mainprop}.  Suppose $P\subseteq \{0,1,\ldots, N\}$ is an arithmetic progression with length at most $y^{1/2}/10$. Then $P \cap S$ is an arithmetic progression.
\end{corollary}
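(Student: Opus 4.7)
The plan is to decompose the set $S$ from Proposition~\ref{mainprop} as an intersection over primes. Setting $Q_p := \{n \in \mathbb{Z} : nd_p \in [\ell_p, r_p] \pmod{p}\}$, one has $S = \bigcap_{p \in \mathcal{P}} Q_p$, so $P \cap S = \bigcap_{p \in \mathcal{P}} (P \cap Q_p)$. Each $Q_p$ consists precisely of the integers whose residue mod $p$ lies in a fixed arithmetic progression with step $d_p^{-1} \pmod{p}$ and length $r_p - \ell_p + 1 \leq 2p^{1/2-\eps} + 1$, so the problem reduces to understanding the intersections of $P$ with such sets.

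Next I would apply Lemma~\ref{lem2} to each prime $p \in \mathcal{P}$, taking $M = y$ so that $p \in [M, 2M]$ automatically. The assumption $|P| \leq y^{1/2}/10 = M^{1/2}/10$ is exactly the first hypothesis of the lemma, and the length $2p^{1/2-\eps} + 1$ of the mod-$p$ progression is bounded by $p^{1/2}/10$ once $p^{\eps}$ exceeds an absolute constant; this is guaranteed by the lower bound $y \geq (4C\log N)^{1/2\eps}$ of Proposition~\ref{mainprop} together with $N > N_0(\eps)$ for $N_0(\eps)$ taken sufficiently large. The lemma then yields that each $P \cap Q_p$ is either empty or an arithmetic progression contained in $P$.

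To finish, if some $P \cap Q_p$ is empty then $P \cap S$ is empty, which we may regard as a degenerate arithmetic progression. Otherwise I would invoke the elementary fact that the intersection of finitely many arithmetic progressions in $\mathbb{Z}$ is, by the Chinese Remainder Theorem, either empty or again an arithmetic progression (with common difference the least common multiple of the individual common differences). Iterating this observation over the primes of $\mathcal{P}$ shows that $P \cap S$ is an arithmetic progression, as required. The main and only non-trivial ingredient is Lemma~\ref{lem2}; the rest is a routine verification of numerical hypotheses together with an appeal to CRT, so I do not anticipate any significant obstacle.
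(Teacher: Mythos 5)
Your proof is correct, and it rests on the same key ingredient as the paper's argument, namely applying Lemma~\ref{lem2} once for each prime $p\in\mathcal{P}$ after noting $P\cap S=\bigcap_{p\in\mathcal{P}}(P\cap Q_p)$. The only organizational difference is in how the intersections are combined: the paper proceeds iteratively, setting $P_j = P_{j-1}\cap Q_{p_j}$ and invoking Lemma~\ref{lem2} at each step with the running progression $P_{j-1}$ (whose cardinality is $\le y^{1/2}/10$ by inclusion), so that the final object is automatically a progression with no extra step needed; you instead apply the lemma with the fixed $P$ for every prime and then combine the resulting pieces via the elementary observation that a finite arithmetic progression is a residue class intersected with an interval, so an intersection of finitely many of them is again one (or empty). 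Both routes are valid and of comparable length; the paper's avoids the extra CRT-type combination step, while yours makes the decomposition-then-recombine structure more explicit. Your verification of the numerical hypotheses (taking $M=y$ and checking $2p^{1/2-\eps}+1\le p^{1/2}/10$ once $p^\eps$ exceeds an absolute constant, guaranteed by the lower bound on $y$) is also correct.
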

\begin{proof}
List the primes in $\mathcal{P}$ by $p_1, p_2, \ldots, p_m$. For each $1\leq j \leq m$, let 
$$
Q_{j}=\{0\leq n \leq N: n d_{p_j} \in [\ell_{p_j}, r_{p_j}] \pmod {p_j}\}.
$$
Let $P_0=P$. For each $1\leq j \leq m$, let $P_j=P_{j-1} \cap Q_j$. Note that $S=\bigcap_{j=1}^m Q_j$ and thus $P_m=P \cap S$. Using Lemma~\ref{lem2}, it is easy to prove by induction that $P_j$ is an arithmetic progression with length at most $y^{1/2}/10$ for each $0\leq j \leq m$. In particular, $P \cap S=P_m$ is an arithmetic progression.
\end{proof}

We also need the following lemma:
\begin{lemma}\label{lem:gcd}
Let $y$ and $S$ be as in the statement of Proposition~\ref{mainprop}.
Let 
$$
a_1,a_2, \ldots, a_m, b_1, b_2, \ldots, b_m \in S
$$
and let 
$
\delta_1, \delta_2, \ldots, \delta_m, \eta_1, \eta_2, \ldots, \eta_m
$
be integers with 
$$
\sum_{i=1}^m |\delta_i|+\sum_{i=1}^m |\eta_i|\ \leq 10.
$$
Write 
$
a=\sum_{i=1}^m \delta_i a_i\ {\rm and\ } b=\sum_{i=1}^m \eta_i b_i.
$
Then 
$$
\frac{a}{\gcd(a,b)}\leq \sum_{i=1}^m |\delta_i| \cdot (2y)^{1/2-\eps}.
$$
\end{lemma}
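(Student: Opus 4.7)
The plan is to translate the defining congruences on the $a_i$ and $b_i$ into a divisibility constraint tying together $\alpha := a/\gcd(a,b)$ and $\beta := b/\gcd(a,b)$ modulo each prime $p\in\mathcal{P}$, and then promote this mod-$p$ congruence to an integer identity via a size comparison. Set $D=\sum_i|\delta_i|$ and $E=\sum_i|\eta_i|$ (so $D+E\le 10$), and let $g=\gcd(a,b)$, so that $\alpha$ and $\beta$ are coprime; the degenerate cases $a=0$ or $b=0$ are trivial and I set them aside.

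First I establish the divisibility. For each $p\in\mathcal{P}$ and each $i$, I reduce $a_id_p$ and $b_id_p$ to their representatives $s_i^{(p)}, t_i^{(p)} \in [\ell_p,r_p]\subseteq[-p^{1/2-\eps},p^{1/2-\eps}]$, and set
\[
u_p := \sum_{i=1}^m \delta_i s_i^{(p)}, \qquad v_p := \sum_{i=1}^m \eta_i t_i^{(p)}.
\]
Then $u_p\equiv ad_p\pmod p$, $v_p\equiv bd_p\pmod p$, and $|u_p|\le Dp^{1/2-\eps}$, $|v_p|\le Ep^{1/2-\eps}$ (so that, once $p$ is large enough, $u_p$ and $v_p$ are themselves the least absolute residues of $ad_p$ and $bd_p$). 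Multiplying $ad_p\equiv u_p$ by $b$ and $bd_p\equiv v_p$ by $a$ and subtracting yields $p\mid av_p-bu_p = g(\alpha v_p-\beta u_p)$. Since the number of primes $p\ge y$ dividing $g$ is at most $\log(10N)/\log y$, which is far below $|\mathcal{P}|$ by the hypothesis of Proposition~\ref{mainprop}, I can choose $p\in\mathcal{P}$ with $p\nmid g$, giving $p\mid \alpha v_p-\beta u_p$.

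The payoff is this: if for some such $p$ the \emph{integer} equality $\alpha v_p = \beta u_p$ holds (not merely the congruence), then coprimality of $\alpha$ and $\beta$ forces $\alpha\mid u_p$, whence $|\alpha|\le|u_p|\le Dp^{1/2-\eps}\le D(2y)^{1/2-\eps}$, which is exactly the claimed bound. A sufficient condition for this integer equality is $|\alpha v_p-\beta u_p|<p$, which in turn follows from $|\alpha|E+|\beta|D<p^{1/2+\eps}$.

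The main obstacle will be verifying this size condition. The naive bounds $|\alpha|,|\beta|\le 10N$ are far too weak: what I need is essentially the very bound I am trying to prove, so a direct single-pass application is circular. I expect the resolution to use a combination of symmetry and a careful contradiction hypothesis: the lemma is symmetric in the roles of $a$ and $b$, yielding the analogous bound $|\beta|\le E(2y)^{1/2-\eps}$; under the joint failure hypothesis ``$|\alpha|>D(2y)^{1/2-\eps}$'' one has $\alpha v_p-\beta u_p\ne 0$ for every admissible $p$, and hence $p\le|\alpha v_p-\beta u_p|\le(|\alpha|E+|\beta|D)p^{1/2-\eps}$, giving $p^{1/2+\eps}\le |\alpha|E+|\beta|D$ simultaneously for \emph{every} $p\in\mathcal{P}$ with $p\nmid g$. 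Exploiting the abundance of such primes (and in particular primes near $2y$) should furnish the desired contradiction. Closing this final step cleanly is the technical heart of the lemma.
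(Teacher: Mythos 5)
Your setup is correct and matches the paper's first half: reducing each $a_i d_p$ and $b_i d_p$ to small representatives $s_i^{(p)}, t_i^{(p)} \in [\ell_p, r_p]$, forming $u_p, v_p$, and deriving $p \mid a v_p - b u_p$ is exactly the paper's step, and the intended endgame (coprimality of $a/g$ and $b/g$ forces $a/g \mid u_p$ once $a v_p = b u_p$ as integers) is also the right conclusion. However, the route you sketch to establish that integer equality does not close, and you correctly flag this. The per-prime size comparison requires a nontrivial a priori bound on $|\alpha|, |\beta|$, and the only available bound is $O(N)$, which is far too weak; your contradiction hypothesis only yields $p^{1/2+\eps} \le |\alpha|E + |\beta|D$ for each admissible $p$, and since $|\alpha|, |\beta|$ can be as large as $10N$, this inequality is simply satisfied for all $p \le 2y \le 2N$ and produces no contradiction. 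There is no way to bootstrap this prime-by-prime.

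The missing idea is to aggregate across primes via pigeonhole, and this is precisely where the hypothesis $|\mathcal{P}| \ge C y^{1-2\eps}\log N/\log y$ is used. The pairs $(u_p, v_p)$ satisfy $|u_p| + |v_p| < 40 y^{1/2-\eps}$, so they range over only $O(y^{1-2\eps})$ values; by pigeonhole some fixed pair $(s,t)$ equals $(u_p, v_p)$ for at least $|\mathcal{P}|/(C' y^{1-2\eps}) \ge 2\log N/\log y$ primes $p \in \mathcal{P}$. Each such $p$ divides the \emph{same} integer $ta - sb$, and they are distinct primes $\ge y$, so if $ta \neq sb$ then $|ta - sb| \ge y^{2\log N/\log y} = N^2$, contradicting the trivial bound $|ta - sb| \ll N^{3/2}$. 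Hence $ta = sb$, and then coprimality of $a/\gcd(a,b)$ and $b/\gcd(a,b)$ gives $a/\gcd(a,b) \mid s$, so $a/\gcd(a,b) \le |s| \le \sum_i |\delta_i| \cdot (2y)^{1/2-\eps}$. Without this pigeonhole-plus-product-of-primes step, the argument cannot be completed.
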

\begin{proof}
Let $p\in \mathcal{P}$ be a fixed prime. Let $e_p\in \{1,2, \ldots, p-1\}$ such that $e_p d_p \equiv 1 \pmod p$. Then for each $1\leq i \leq m$, there are integers $s_i, t_i \in [-p^{1/2-\eps},p^{1/2-\eps}]\subseteq [-(2y)^{1/2-\eps}, (2y)^{1/2-\eps}]$ such that $a_i \equiv s_i e_p \pmod p$ and $b_i \equiv t_i e_p \pmod p$. Let $s=\sum_{i=1}^m \delta_i s_i$ and $t=\sum_{i=1}^m \eta_i t_i$.  We note that $|s|+|t|<40y^{1/2-\eps}$ 
and that
$$
e_p s\ \equiv\ \sum_{i=1}^m \delta_i a_i
\ \equiv\ a \pmod{p},
$$
and 
$$
e_p t\ \equiv\ \sum_{i=1}^m \eta_i b_i
\ \equiv\ b \pmod{p}.
$$
It follows that $p \mid (ta - sb)$. 

The above argument shows that for each prime $p\in \mathcal{P}$, there are integers $s_p, t_p$ such that $|s_p|+|t_p|<40y^{1/2-\eps}$ and $p\mid (t_p a-s_p b)$. Note that there is an absolute constant $C'$ such that the number of possible integer pairs $(s_p,t_p)$ with $|s_p|+|t_p|<40y^{1/2-\eps}$ is at most $C'y^{1-2\eps}$. Let $C=2C'$. Then, by the pigeonhole principle, we can find some integers $s$ and $t$ with $|s|+|t|<40y^{1/2-\eps}$ such that $t a-sb$ is divisible by at least $|\mathcal{P}|/C'y^{1-2\eps}$ different primes in $\mathcal{P}$. Thus, if $t a \neq sb$, then
$$
|t a -sb |\geq y^{\frac{|\mathcal{P}|}{C'y^{1-2\eps}}}\geq y^{\frac{2\log N}{\log y}}=N^2.
$$
However, we have $|t a-sb|<N^2$, which forces $t a=sb$. It follows that 
\[
\frac{a}{\gcd(a,b)}\leq |s|\leq \sum_{i=1}^m |\delta_i| \cdot (2y)^{1/2-\eps}.\qedhere
\]
\end{proof}

Now we are ready to complete the proof of Theorem~\ref{maintheorem} by proving Proposition~\ref{mainprop}.

\begin{proof}[Proof of Proposition~\ref{mainprop}]
Observe that $0\in S$. If $S=\{0\}$, then we are done. Assume that $S$ has a nonzero element, say $a_0$. Then for each $b\in S$, by Lemma \ref{lem:gcd} (using the case $m=1$ and setting $\delta_m = \delta_1 = \eta_m = \eta_1 = 1$ and $a=a_0$), $b \in d \{0,1,\ldots, (2y)^{1/2-\eps}\}$, where $d=\gcd(a_0,b)$. In particular,
$$
S\subseteq \bigcup_{d \mid a_0} d \{0,1,\ldots, (2y)^{1/2-\eps}\}:=\bigcup_{i=1}^k P_i.
$$
For each $1\leq i \leq k$, since $P_i$ is an arithmetic progression starting from 0 with length at most $(2y)^{1/2-\eps}$, Corollary \ref{cor2} implies that $Q_i:=P_i \cap S$ is an arithmetic progression starting from $0$ with length at most $(2y)^{1/2-\eps}$. Note that we have $S=\bigcup_{i=1}^k Q_i$. 

Next, we show that $S$ is an arithmetic progression. If $k=1$, then we are done. Next, assume that $k\geq 2$. We claim that $Q_1 \cup Q_2=Q$ for some arithmetic progression $Q$ starting from $0$ with length at most $4y^{1/2-\eps}$. If $Q_1=\{0\}$ or $Q_2=\{0\}$, this is obvious. Next assume that $|Q_1|, |Q_2|\geq 2$. Say $Q_1=\{0, s_1, 2s_1, \ldots, L_1s_1\}$ and $Q_2=\{0, s_2, 2s_2, \ldots, L_2s_2\}$. It follows that $Q_1 \cup Q_2\subseteq Q':=\{0, s, 2s, \ldots, \ell s\}$, where $s=\gcd(s_1,s_2)$ and $\ell s \leq \max \{L_1 s_1, L_2 s_2\}$. Pick 
$$
a_1=L_1 s_1, a_2=s_2, b_1=(L_1-1)s_1, b_2=s_2, \delta_1=\delta_2=\eta_1=\eta_2=1
$$
in Lemma \ref{lem:gcd}, we obtain that
$$
4y^{1/2-\eps}\geq \frac{L_1 s_1+s_2}{\gcd(L_1 s_1+s_2, (L_1-1)s_1+s_2)}\geq \frac{L_1 s_1}{s},
$$
that is, $L_1s_1/s\leq 4y^{1/2-\eps}$. Similarly, $L_2 s_2/s\leq 4y^{1/2-\eps}$. This shows that $\ell \leq 4y^{1/2-\eps}$. Since $Q'$ is an arithmetic progression starting from $0$, with length at most $4y^{1/2-\eps}$, Corollary \ref{cor2} implies that $Q:=Q' \cap S=Q_1\cup Q_2$ satisfies the claim. Thus, we have shown that when $m=2$, $S$ is an arithmetic progression. If $k\geq 3$, we can apply the above argument inductively to conclude that $S$ is an arithmetic progression. 

Finally, since $S$ is an arithmetic progression starting from $0$, we can write $S=\{jd: 0\leq j \leq \ell\}$. If $S\neq \{0\}$, that is, $\ell \geq 1$, then by Lemma~\ref{lem:gcd}, we have
$$
\ell=\frac{\ell d}{\gcd(\ell d, (\ell-1)d)}\leq (2y)^{1/2-\eps}.
$$
Thus, $S$ has length at most $(2y)^{1/2-\eps}+1$, as required.
\end{proof}

Next, we use results on the distribution of primes in short intervals to prove Corollary~\ref{cor:allprimes}.

\begin{proof}[Proof of Corollary~\ref{cor:allprimes}]
Let $C$ be the absolute constant from Theorem~\ref{maintheorem}. Note that we have $y^{2\eps}\geq 4C\log N$.

By Theorem~\ref{maintheorem}, $A$ is an arithmetic progression of length at most $(2y)^{1/2-\eps}$. Assume that $|A|\geq 2$, for otherwise we are done. Let $d$ be the step size of the arithmetic progression $A$ and let $z$ be the smallest prime in $[y,2y]$ such that $z \nmid d$. Then $A_z$ is an arithmetic progression in $\Z_p$ of length $|A|$ and it follows that $|A|\leq z^{1/2-\eps}+1$. 

On the other hand, since $$N\geq d\geq \prod_{y\leq p<z} p \geq y^{\pi(z-1)-\pi(y)},$$ it follows that $\pi(z-1)-\pi(y)\leq \log N/\log y$. To establish an upper bound on $z$, we use results of Huxley \cite{Huxley} (see also the recent breakthrough of Guth and Maynard \cite{GM}) on counting the number of primes in short intervals. We consider two cases based on the value of $\eps$. 
\begin{enumerate}
    \item $\eps\geq 0.3$. In this case, by \cite{Huxley} or \cite[Corollary 1.3] {GM}, for sufficiently large $y$, $$\pi(y+y^{2\eps}/C)-\pi(y)\geq y^{2\eps}/2C\log y\geq 2\log N/\log y.$$ Since $\eps<1/2$, it follows that $z=(1+o(1))y$ for sufficiently large $N$.
    \item $\eps<0.3$. In this case, by \cite[Corollary 1.4] {GM}, when $y$ is sufficiently large, we have $$\pi(y+y/\exp((\log y)^{1/5})+y^{0.8})-\pi(y)\geq \frac{y^{0.8}}{2\log y}> \frac{2\log N}{\log y}.$$ This again implies that $z=(1+o(1))y$ for sufficiently large $N$.
\end{enumerate}

To conclude, we can find an absolute constant $C_0$, such that if $N\geq N_0(\eps)+C_0$, then $z=(1+o(1))y$ and thus $|A|\leq z^{1/2-\eps}+1=(1+o(1))y^{1/2-\eps}$.
\end{proof}

We end the section with a short proof of Theorem~\ref{thm:interval}.

\begin{proof}[Proof of Theorem~\ref{thm:interval}]

Assume that $A$ precisely consists of elements $n \in [0, N]$ with the property that
$$
n \in  \{\delta_p+j : 0\leq j \leq r_p\} \pmod p
$$
for all primes $p \in \mathcal{P}$, where $\delta_p, r_p$ are integers with $0\leq r_p\leq p^{1-\eps}$. 

If $A$ is empty, then we are done. Assume otherwise that $A$ is non-empty and pick the smallest element $a_1$ in $A$. Then for each prime $p \in \mathcal{P}$, we can find an integer $b_p \in [0, r_p]$ such that $a_1 \equiv \delta_p +b_p \pmod p$. It follows that
$A-a_1$ precisely consists of elements $n \in [0, N-a_1]$ with the property that
$$
n  \in  [-b_p,r_p-b_p] \pmod p, \quad \forall p \in [y,2y].
$$
Note that we have $-p^{1-\eps}\leq -b_p \leq 0\leq r_p-b_p\leq p^{1-\eps}$. Let $a_2$ be the largest element of $A$. Then for each prime $p\in \mathcal{P}$, there exists some integer $\alpha_p$ such that $|\alpha_p|\leq (2y)^{1-\eps}$ and $a_2-a_1\equiv \alpha_p\pmod p$. By the pigeonhole principle, we can find some integer $\alpha$ with $|\alpha|\leq (2y)^{1-\eps}$ such that $a_2-a_1\equiv\alpha\pmod p$ for at least $|\mathcal{P}|/(2\cdot(2y)^{1-\eps}+1)$ different primes in $\mathcal{P}$. Thus, if $a_2-a_1\neq \alpha,$ then 
\[|a_2-a_1-\alpha|\geq y^{\frac{|\mathcal{P}|}{2(2y)^{1-\eps}+1}}\geq y^{\frac{3\log N}{2\log y}} = N^{3/2}.\]
However, we know $|a_2-a_1-\alpha|<N^{3/2}$, which forces $a_2-a_1 = \alpha$. Now for any $p\in \mathcal{P}$, $a_2-a_1\leq (2y)^{1-\eps}<p/4$, which guarantees that $[a_1,a_2]$ is the shortest interval containing $\{a_1,a_2\}$ and hence $I_p\supseteq [a_1,a_2]\cap\Z_p$. It follows that $A = [a_1,a_2]\cap\Z$. In particular, $|A| = a_2-a_1\leq |R_{p_0}|\leq p_0^{1-\eps}+1$, as required. 
\end{proof}

\section{Longer progressions: Proof of Theorems~\ref{thm:AP1/2} and~\ref{thm:largeA}}\label{sec:1/2}

In this section, we prove theorems~\ref{thm:AP1/2} and~\ref{thm:largeA}. While the proof of these two theorems shares some similarity with that of Theorem~\ref{maintheorem}, we need several extra ingredients and observations. 
\subsection{Proof of Theorem~\ref{thm:AP1/2}}
By translating $A$ and $A\cap Q$ we may assume that $$Q = \{0,a_1,2a_1,\ldots, (M-1)a_1\}$$ for some integer $M\geq y^c$, and $0\in A\subseteq [-N,N]$.     Let $a_2$ be an arbitrary nonzero element of $A$. 
Let $$\delta = 8y^{\frac{2\theta-1-c}{2}+\eps}.$$ From the assumption that $|A\cap Q|\geq \delta|Q|>y^{\frac{2\theta-1+c}{2}}\geq 1$, there exist $s,s'\in A\cap Q$ such that $s-s' = ta_1$ for some integer $1\leq t\leq \delta^{-1}$ by pigeonhole. 

Let $B = (A\cap Q)/a_1$, which is a subset of $\{0,1,\ldots,M\}$. Since $|B|\geq \delta M$, it follows from Lemma~\ref{lem:lev1} that for some positive integer $h \leq \lceil4\delta^{-1}\rceil$, the $h$-fold sumset $hB$ contains an arithmetic progression $$Q' = \{mb_1,(m+1)b_1,\ldots,(m+M)b_1\},$$ where $0<b_1\leq 4\delta^{-1}$. In particular, $p\nmid b_1$ for all primes $p\in [y,2y]$.   
    
    Fix a prime $p\in [y,2y]$. Suppose $R_p = \{\alpha_p, \alpha_p+u_p,\ldots,\alpha_p+\ell_pu_p\}$ for some $\ell_p\leq p^\theta$. Since $0\in A$, we have $\alpha_p \equiv m_pu_p\pmod p$ for some $-\ell_p\leq m_p\leq 0$. In particular, $R_p \subseteq \{ju_p: -\ell_p\leq j \leq \ell_p\}$.    
    It follows that there exist two integers $k_1^{(p)},k_2^{(p)}$ with $|k_i^{(p)}|\leq \ell_p$ so that 
    \begin{equation}\label{eq:cong1}
    ta_1\equiv k_1^{(p)}u_p\pmod p\quad\text{and}\quad a_2\equiv k_2^{(p)}u_p\pmod p.
    \end{equation}
%    Therefore, we have $p\mid k_2^{(p)}ta_1-k_1^{(p)}a_2$. %Note that for any $ra_1\in A\cap Q$, we have
    %\[ra_1\equiv rk_1^{(p)}\overline{t}u_p\equiv (m_p+i)u_p\pmod p\]
    %for some $0\leq i\leq \ell_p$. 
    Next, we use the progression $Q'$ to deduce a better bound on $|k_1^{(p)}|$. For any $0\leq j\leq M$, since $(m+j)b_1\in hB$, we have
    $(m+j)b_1a_1 \in h(A \cap Q)$ and thus there is an integer $n_j$ with $|n_j|\leq h\ell_p$ such that 
    \[(m+j)b_1a_1\equiv n_ju_p\pmod p\quad \Rightarrow \quad (m+j)b_1k_1^{(p)}\equiv n_jt\pmod p.\]
    From size consideration, we get $|n_jt|\leq h\ell_p\delta^{-1}\leq 8\ell_p\delta^{-2}<p/4$. Thus, 
    \[
    j(n_1-n_0)t\equiv jb_1k_1^{(p)}\equiv (n_j-n_0)t\pmod p,
    \] 
    which implies $|(n_1-n_0)t|\leq \frac{16\ell_p}{\delta^2M}$. We also know that $|b_1k_1^{(p)}|\leq 4\delta^{-1}\ell_p<p/2,$ hence $$b_1k_1^{(p)} = (n_1-n_0)t \quad \text{and} \quad |k_1^{(p)}|\leq \frac{16\ell_p}{\delta^2|b_1|M} \leq \frac{16(2y)^\theta}{\delta^2M}.$$

    The above argument shows that for each prime $p\in [y,2y]$, there are integers $k_1^{(p)},k_2^{(p)}$ such that 
    \begin{equation}\label{eq:k1p}
    |k_1^{(p)}|\leq \frac{16(2y)^\theta}{\delta^2M} \quad \text{and} \quad |k_2^{(p)}|\leq (2y)^\theta,
    \end{equation} with $p\mid k_2^{(p)}ta_1-k_1^{(p)}a_2$ by relations~\eqref{eq:cong1}.      Note that the number of such integer pairs $(k_1^{(p)},k_2^{(p)})$ is at most $\frac{64(2y)^{2\theta}}{\delta^2M}$. It follows from  the prime number theorem and the pigeonhole principle that we can find some integers $k_1$ and $k_2$ satisfying the estimates~\eqref{eq:k1p}, such that $k_2ta_1-k_1a_2$ is divisible by all the primes in a subset of primes $\mathcal{P}\subseteq [y,2y]$ with $$|\mathcal{P}|\geq \frac{y}{2\log y}\cdot \frac{\delta^2M} {64(2y)^{2\theta}}>   
    \frac{\delta^2My^{1-2\theta}}{512\log y}\geq \frac{y^{2\eps}}{8\log y}.$$ If $k_2ta_1\neq k_1a_2$, then the assumption $y\geq (16\log N)^{1/2\eps}$ implies that
    \[|k_2ta_1-k_1a_2|\geq y^{|\mathcal{P}|}\geq \exp(y^{2\eps}/8)\geq \exp(2\log N)= N^2.\]
    However, we have $|k_2ta_1-k_1a_2|<N^2$, which forces $k_2ta_1=k_1a_2$. If $k_2 = 0$, then from relations~\eqref{eq:cong1} we have $p\mid a_2$ for all $p\in \mathcal{P}$, which implies $a_2 = 0$, a contradiction. Let $d = \gcd(a_1,a_2)$, $a_i' = a_i/d$ for $i=1,2$. Then we must have 
    \begin{equation}\label{eq:lambda}
    tk_2 = \lambda a_2' \quad \text{and} \quad  k_1 = \lambda a_1'
    \end{equation}
    for some nonzero integer $\lambda$.  It follows from our choice of $d$ that $(A\cap Q)\cup \{a_2\}$ is contained in a longer arithmetic progression $\{-Ld,\ldots,-d,0,d,\ldots, Ld\}$, where $L\leq \max(M|a_1'|,|a_2'|)$. 

\medskip

    Next, we fix a prime $p\in [y,2y]$ and bring these back to the congruence relations~\eqref{eq:cong1}. One can deduce that 
    \begin{equation}\label{eq:reducek1p}
    ta_1'd\equiv k_1^{(p)}u_p\pmod p\quad \text{and}\quad a_2'd\equiv k_2^{(p)}u_p\pmod p,
    \end{equation}
    which implies
    \[
    k_2^{(p)}ta_1'd\equiv k_1^{(p)}a_2'd\pmod p.
    \]
    Next, we consider two cases.
    
    \textbf{Case 1: $p\nmid d$.} Then equation~\eqref{eq:lambda} implies that
    \[
    \lambda tk_2^{(p)}a_1'\equiv \lambda k_1^{(p)} a_2'\pmod p\quad \Rightarrow\quad tk_2^{(p)}k_1\equiv tk_1^{(p)}k_2\pmod p.
    \]
    From estimates~\eqref{eq:k1p}, it follows that $k_2^{(p)}k_1 = k_1^{(p)}k_2$. Therefore, equation~\eqref{eq:lambda} implies that $ta_1'k_2^{(p)}=a_2'k_1^{(p)}$. Since $a_1'$ and $a_2'$ are coprime, there is some nonzero integer $v_p$ such that    
    \begin{equation}\label{eq:k1k2}
    tk_2^{(p)} = v_pa_2' \quad \text{and} \quad k_1^{(p)} = v_pa_1'.
    \end{equation}
    For simplicity, we may assume $v_p>0$; the other case is similar. Combined with relations~\eqref{eq:reducek1p}, we get 
    \[
    ta_1'd\equiv v_pa_1'u_p\pmod p\quad \text{and}\quad ta_2'd\equiv v_pa_2'u_p\pmod p.
    \]
    Since $\gcd(a_1',a_2') = 1$, we must have $td\equiv v_p u_p\pmod p$.
    
    Suppose now $jd\in (A\cap Q)\cup\{a_2\}$. Then there is an integer $0\leq i\leq \ell_p$ so that $jd\equiv (m_p+i)u_p\pmod p$. Therefore,
    \begin{equation}\label{eq:mp+i}
    (m_p+i)\frac{t}{\gcd(t,v_p)}u_p\equiv j\frac{t}{\gcd(t,v_p)}d\equiv j\frac{v_p}{\gcd(t,v_p)}u_p\pmod p.
    \end{equation}
 Since $|j|\leq L\leq \max(M|a_1'|, |a_2'|)$, by estimates~\eqref{eq:k1p} and~\eqref{eq:k1k2}, we have either 
    $$
    |jv_p|\leq L|v_p|\leq M|a_1'||v_p|=M|k_1^{(p)}|\leq 16(2y)^{\theta} \delta^{-2}
    $$
    or
    $$
    |jv_p|\leq L|v_p|\leq |a_2'||v_p|=t|k_2^{(p)}|\leq (2y)^{\theta} \delta^{-1}.
    $$
    In both cases, we have $$|jv_p|\leq 16(2y)^{\theta} \delta^{-2}\leq \frac{1}{2} y^{1+c-\theta-2\eps}<\frac{y}{2}.$$
    Observe that for the left-hand side of equation~\eqref{eq:mp+i}, we have $|(m_p+i)t|\leq \ell_p\delta^{-1}<p/2$. It follows that $j\frac{v_p}{\gcd(t,v_p)} = (m_p+i)\frac{t}{\gcd(t,v_p)}$ and hence $j\in I_p\cdot\frac{t}{\gcd(t,v_p)}$, where
    \[
    I_p = \bigg[ \frac{m_p}{v_p/\gcd(t,v_p)}, \frac{m_p+\ell_p}{v_p/\gcd(t,v_p)}\bigg]\bigcap\Z.
    \]
    On the other hand, for any $j\in I_p\cdot\frac{t}{\gcd(t,v_p)}$, 
    \[
    jd\equiv \frac{wtd}{\gcd(t,v_p)}\equiv w\frac{v_p}{\gcd(t,v_p)}u_p\pmod p
    \]
    for some $w\in I_p$, which immediately implies $jd\pmod p\in R_p$. 

    \textbf{Case 2: $p\mid d$.} In this case, we have $(A\cap Q)\cup\{a_2\}\subseteq \{-Ld,\ldots,-d,0,d,\ldots, Ld\}$ and $\{-Ld,\ldots, Ld\}\pmod p=\{0\}\subseteq R_p$ trivially.

\medskip
    
    To conclude, we have obtained a set 
    \[
    Q_0 = \{-Ld,\ldots,-d,0,d,\ldots, Ld\} \cap \bigcap_{p\in [y,2y],\ p\nmid d}\bigg(I_p\cdot\frac{t}{\gcd(t,v_p)}\bigg) 
    \]
    that satisfies $(A\cap Q)\cup\{a_2\}\subseteq Q_0$ and $Q_0\pmod p\subseteq R_p$ for each $p\in [y,2y]$. In particular, this means $Q_0\subseteq A$. Note that $Q_0$ is given by the intersection of several truncations of homogeneous arithmetic progressions, which forces itself to be an arithmetic progression. If $Q_0\neq A$, then we can replace $A\cap Q$ by $Q_0$ and iterate. This iteration must terminate, from which we conclude that $A$ must be an arithmetic progression.

\subsection{Proof of Theorem~\ref{thm:largeA}}\

(1).
    Let 
    \[
    C = \frac{2^{1-\theta}-1}{1-\theta},\quad\delta = 64\sqrt{2}y^{\frac{\theta-1}{2}+\eps} \geq 64C^{-1/2}y^{\frac{\theta-1}{2}+\eps}.
    \]
    Since $N$ is large compared to $\theta$ and $\eps$, by the given lower bound on $y$, we can assume that $y$ is large enough. 
    For any positive integer $h$ and any prime $p\in [y,2y]$, we have $(hA)_p\subseteq hR_p$. Since $R_p$ is an arithmetic progression, it follows that $|hR_p|\leq h|R_p|\leq hp^\theta$. Now by Lemma~\ref{GS} and the prime number theorem, 
    \begin{align}
        |hA|
        &\leq \frac{\sum_{p\in [y,2y]}\log p-\log (hN)}{\sum_{p\in [y,2y]}\frac{\log p}{|(hA)_p|}-\log (hN)}\leq \frac{2y}{\sum_{p\in [y,2y]}\frac{\log p}{hp^\theta}-\log (hN)} \notag\\
        &\leq \frac{2y}{\frac{2^{1-\theta}-1}{2h(1-\theta)}y^{1-\theta}-\log (hN)}\leq \frac{8h}{C}y^\theta \label{eq:Gsieve_cond2}, 
    \end{align}
    provided that \begin{equation*}
    \frac{C}{h}y^{1-\theta} > 4\log (hN).
    \end{equation*}
    Set $h = \lceil 32/(C\delta) \rceil$. Since $y\geq (\log N)^{1/\eps}$, we have
    $$
    \frac{C}{h}y^{1-\theta}\gg \delta y^{1-\theta}\gg y^{\frac{1-\theta}{2}+\eps}\gg y^{\frac{1-\theta}{2}} \log N.
    $$
    Thus, when $N$ is sufficiently large compared to $\theta$ and $\eps$,   
    inequality \eqref{eq:Gsieve_cond2} holds. It follows that
    \[
    |hA|\leq \frac{8h}{C}y^\theta\leq \frac{h(h+1)}{3}\delta y^\theta\leq \frac{h(h+1)}{2}(|A|-2),
    \]
    and Lemma~\ref{lem:lev2} implies that there exists an arithmetic progression $Q$ such that $A\subseteq Q$ and $h|A|\geq |Q|$. In particular, since $\theta-\frac{1-\theta}{2}>2\theta-1$, when $N$ is sufficiently large compared to $\theta$ and $\eps$, we have  
    $|Q|\geq |A|\geq y^{2\theta-1}$.
    Let $c = \min\left(\frac{\log |Q|}{\log y},\theta\right)$, $\eta = |A|/|Q|\geq 1/h$. 

    If $|Q|\leq y^\theta$, then from our choice of $c$, $\delta$ and $h$, we have
    \[
    8y^{\frac{2\theta-1-c}{2}+\eps} = \frac{8y^{\frac{2\theta-1}{2}+\eps}}{|Q|^{1/2}} = 8\eta^{1/2}\frac{y^{\frac{2\theta-1}{2}+\eps}}{|A|^{1/2}} \leq 8\eta^{1/2}\frac{y^{\frac{\theta-1}{2}+\eps}}{\delta^{1/2}} \leq \frac{1}{8}(C\eta\delta)^{1/2} \leq \eta.
    \]
    It then follows from Theorem~\ref{thm:AP1/2} that $A$ must be an arithmetic progression.

    If $|Q| > y^\theta$, then $c = \theta$ and 
    \[8y^{\frac{2\theta-1-c}{2}+\frac{\eps}{2}} = 8y^{\frac{\theta-1}{2}+\frac{\eps}{2}} \leq \frac{1}{h}\leq \eta \]
    when $N$ is sufficiently large depending on $\theta$ and $\eps$. Again, we conclude from Theorem~\ref{thm:AP1/2} that $A$ must be an arithmetic progression. 

(2).
    Let 
    \[
    C = \frac{2^{1-\theta}-1}{1-\theta},\quad\delta = |A|/y^\theta\geq  80y^{\theta-1+\eps},\quad h = \lceil 32/(C\delta) \rceil.
    \]
    Following the proof above, we have 
    \begin{align}
        |hA|
        &\leq \frac{\sum_{p\in [y,2y]}\log p-\log (hN)}{\sum_{p\in [y,2y]}\frac{\log p}{|(hA)_p|}-\log (hN)}\leq \frac{2y}{\sum_{p\in [y,2y]}\frac{\log p}{hp^\theta}-\log (hN)} \notag\\
        &\leq \frac{2y}{\frac{2^{1-\theta}-1}{2h(1-\theta)}y^{1-\theta}-\log (hN)}\leq \frac{8h}{C}y^\theta \label{eq:Gsieve_cond3}, 
    \end{align}
    provided that \begin{equation*}
    \frac{C}{h}y^{1-\theta} > 4\log (hN).
    \end{equation*}
    Since $y> (16\log N)^{1/\eps}$, $C\geq 1/2$, we have
    $$
    \frac{C}{h}y^{1-\theta}\geq \frac{C^2}{40}\delta y^{1-\theta} \geq 2C^2y^{\eps}\geq 8\log N.
    $$
    Thus, when $N$ is sufficiently large compared to $\theta$ and $\eps$,   
    inequality \eqref{eq:Gsieve_cond3} holds. It follows that
    \[
    |hA|\leq \frac{8h}{C}y^\theta\leq \frac{h(h+1)}{3}\delta y^\theta\leq \frac{h(h+1)}{2}(|A|-2),
    \]
    and Lemma~\ref{lem:lev2} implies that there exists an arithmetic progression $Q$ such that $A\subseteq Q$ and 
    \[|Q|\leq h|A|\leq \frac{32}{C\delta}|A| \leq 64y^\theta. \qedhere\]

\section{Applications of the inverse theorems} 
\label{sec:app}
In this section, we discuss some applications of the inverse theorems we proved.

\subsection{Union of several arithmetic progressions: proof of Theorem~\ref{thm:kAP}}

\begin{proof}[Proof of Theorem~\ref{thm:kAP}]
Define
\begin{equation}\label{eq:Delta}
\Delta=\Delta(k,\eps)=\exp(3k\log k/\eps).
\end{equation}
Let $C$ be the absolute constant from Theorem~\ref{maintheorem}. For each subset of primes $\mathcal{P} \subseteq [y,2y]$, and each $1\leq i \leq k$, define
$$
S^{(i)}(\mathcal{P})=\{0\leq n \leq N: n \in R_p^{(i)} \pmod p \text{ for all } p \in \mathcal{P}\}.
$$
By Theorem~\ref{maintheorem}, if $|\mathcal{P}|\geq Cy^{1-2\eps} \log N /\log y$, then $S^{(i)}(\mathcal{P})$ is an arithmetic progression of length at most $(2y)^{1/2-\eps}+1$; thus, if $|S^{(i)}(\mathcal{P})|\geq 2$, we can define $D(S^{(i)}(\mathcal{P}))$ to be the step size of the arithmetic progression $S^{(i)}(\mathcal{P})$. Also observe that if $\mathcal{P} \supset \mathcal{P}'$ with $|\mathcal{P}'|\geq Cy^{1-2\eps} \log N /\log y$,  $|S^{(i)}(\mathcal{P})|\geq 2$, and $D(S^{(i)}(\mathcal{P}))>D(S^{(i)}(\mathcal{P}'))$, then
$$
|S^{(i)}(\mathcal{P}')|\geq \frac{D(S^{(i)}(\mathcal{P}))}{D(S^{(i)}(\mathcal{P}'))} (|S^{(i)}(\mathcal{P})|-1)+1>\frac{D(S^{(i)}(\mathcal{P}))}{2D(S^{(i)}(\mathcal{P}'))} |S^{(i)}(\mathcal{P})|.
$$

 The key ingredient of our proof is the following arithmetic regularity lemma.

\begin{claim}[Regularity lemma]\label{lem:RL}
There is a subset of primes $\mathcal{P} \subseteq [y,2y]$  with $$|\mathcal{P}|\geq \frac{Cky^{1-2\eps} \log N}{\log y}$$  such that for each $\mathcal{P}' \subseteq \mathcal{P}$ with $|\mathcal{P}'|\geq |\mathcal{P}|/k$ and each $1\leq i \leq k$, one of the following holds:
\begin{enumerate}
    \item If $|S^{(i)}(\mathcal{P})|\geq 2$, then $D(S^{(i)}(\mathcal{P}'))\geq D(S^{(i)}(\mathcal{P}))/\Delta$.
    \item If $|S^{(i)}(\mathcal{P})|\leq 1$, then $S^{(i)}(\mathcal{P}')=S^{(i)}(\mathcal{P})$.
\end{enumerate}
\end{claim}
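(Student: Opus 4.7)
The plan is to construct $\mathcal{P}$ by a greedy iterative refinement. Initialize $\mathcal{P}$ as the set of all primes in $[y,2y]$. At each step, check whether $\mathcal{P}$ satisfies the claimed dichotomy for every admissible pair $(\mathcal{P}',i)$; if so, output the current $\mathcal{P}$ and stop. Otherwise some $\mathcal{P}'\subseteq\mathcal{P}$ with $|\mathcal{P}'|\geq|\mathcal{P}|/k$ and some index $i$ witness the failure, triggering either case (a) ($|S^{(i)}(\mathcal{P})|\geq 2$ and the step size drops by a factor strictly greater than $\Delta$) or case (b) ($|S^{(i)}(\mathcal{P})|\leq 1$ and $S^{(i)}$ strictly enlarges). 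Replace $\mathcal{P}$ by $\mathcal{P}'$ and iterate. The entire proof reduces to bounding the total number of iterations before termination.

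I will bound case-(b) and case-(a) events separately. Because $S^{(i)}(\mathcal{P}')\supseteq S^{(i)}(\mathcal{P})$ whenever $\mathcal{P}'\subseteq\mathcal{P}$, the length $|S^{(i)}|$ is monotonically non-decreasing along the iteration, so case (b) for a fixed $i$ can occur at most twice (through $|S^{(i)}|=0\to\geq 1$ and $|S^{(i)}|=1\to\geq 2$), giving at most $2k$ case-(b) events in total. For case (a), the essential input is the quoted inequality
\[
|S^{(i)}(\mathcal{P}')|\,>\,\frac{D(S^{(i)}(\mathcal{P}))}{2D(S^{(i)}(\mathcal{P}'))}\,|S^{(i)}(\mathcal{P})|\,\geq\,\frac{\Delta}{2}\,|S^{(i)}(\mathcal{P})|,
\]
which says that each case-(a) event multiplies $|S^{(i)}|$ by a factor at least $\Delta/2$. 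Combined with the uniform bound $|S^{(i)}(\mathcal{P})|\leq(2y)^{1/2-\eps}+1$ from Theorem~\ref{maintheorem} (valid while $|\mathcal{P}|$ stays above the threshold $Cy^{1-2\eps}\log N/\log y$), this caps case-(a) events at $O(\log y/\log\Delta)$ per index $i$. Summing over $i$ and using $\Delta=\exp(3k\log k/\eps)$ yields at most $O(\eps\log y/\log k)$ case-(a) events in total.

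Combining, the total iteration count is $T=O(\eps\log y/\log k)+2k$, giving $k^T\leq(2y)^{O(\eps)}\exp(O(k\log k))$. Starting from $|\mathcal{P}|\gg y/\log y$ by the prime number theorem, the final $|\mathcal{P}|$ exceeds $y^{1-O(\eps)}/(\log y\cdot\exp(O(k\log k)))$, which, using $y^\eps\geq\log N$ from the hypothesis $y\geq(\log N)^{1/\eps}$, surpasses $Cky^{1-2\eps}\log N/\log y$ provided $N>N_0(\eps,k)$. The same chain of inequalities applied at every intermediate stage confirms that $|\mathcal{P}|$ never drops below the Theorem~\ref{maintheorem} threshold, so the uniform length bound on $|S^{(i)}|$ used above is legitimate throughout. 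The principal obstacle I anticipate is exactly this accounting step: if one measured case-(a) progress by $\log D^{(i)}$ (which could a priori be as large as $\log N$) rather than by $|S^{(i)}|$ (bounded by $(2y)^{1/2-\eps}+1$), the count would degrade to $O(\eps\log N/\log k)$, forcing $k^T$ up to $N^{O(\eps)}$ and swamping the room $y^{O(\eps)}$ afforded by the hypothesis on $y$. The specific definition $\Delta=\exp(3k\log k/\eps)$ is calibrated precisely so that the length-based count fits inside the allowance.
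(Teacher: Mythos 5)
Your proposal is correct and follows essentially the same argument as the paper, recast constructively (a greedy refinement with an iteration count) where the paper argues by contradiction along a maximal descending chain $\mathcal{P}_0 \supset \cdots \supset \mathcal{P}_m$. The key quantitative inputs — measuring case-(a) progress by $|S^{(i)}|$ (bounded by $(2y)^{1/2-\eps}+1$ via Theorem~\ref{maintheorem}) rather than by $\log D$, the $\Delta/2$ multiplicative gain per case-(a) step, the $2k$ cap on case-(b) events, and the final balance of $k^T$ against the threshold $Cky^{1-2\eps}\log N/\log y$ using $\log N \leq y^\eps$ — are identical to the paper's.
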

\begin{poc}
We proceed with proof by contradiction. Suppose otherwise that such a subset $\mathcal{P}$ does not exist. Then we can find a chain of subsets of primes:
$$
\mathcal{P}_0 \supset \mathcal{P}_1 \supset \cdots \mathcal{P}_m
$$
satisfying the following properties:
\begin{itemize}
    \item $\mathcal{P}_0$ is the set of all primes in $[y,2y]$,
    \item  $|\mathcal{P}_{j+1}|\geq |\mathcal{P}_{j}|/k$ for each $0\leq j \leq m-1$,
    \item $Cy^{1-2\eps} \log N /\log y\leq |\mathcal{P}_m|<Cky^{1-2\eps} \log N /\log y$,
    \item For each $0\leq j \leq m-1$, there is some $1\leq i_j\leq k$, such that one of the following holds:
    \begin{enumerate}
        \item (Case 1) $|S^{(i_j)}(\mathcal{P}_j)|\geq 2$ and $D(S^{(i)}(\mathcal{P}_{j+1}))\leq D(S^{(i_j)}(\mathcal{P}_j))/\Delta$. In this case, we have
$
|S^{(i_j)}(\mathcal{P}_{j+1})|\geq \Delta|S^{(i_j)}(\mathcal{P}_j)|/2.
$
        
        \item (Case 2) $|S^{(i_j)}(\mathcal{P}_j)|<2$ and $|S^{(i_j)}(\mathcal{P}_{j+1})|>|S^{(i_j)}(\mathcal{P}_j)|$.
    \end{enumerate}
\end{itemize}

Note that the number of $j$ such that Case 2 occurs is at most $2k$. Thus, by the pigeonhole principle, we can find some $1\leq i^*\leq k$ such that Case 1 occurs for at least $(m-2k)/k$ many different $j$'s with $0\leq j \leq m-1$ and $i_j=i^*$. In particular, we have
$$
|S^{(i^*)}(\mathcal{P}_m)|\geq (\Delta/2)^{(m-2k)/k}.
$$
On the other hand, since $|\mathcal{P}_m|\geq Cy^{1-2\eps} \log N /\log y$, we know that $S^{(i^*)}(\mathcal{P}_m)$ is an arithmetic progression of length $(2y)^{1/2-\eps}+1$. Thus,
$$
(\Delta/2)^{(m-2k)/k} \leq |S^{(i^*)}(\mathcal{P}_m)| \leq (2y)^{1/2-\eps}+1.
$$
It follows that 
\begin{equation}\label{eq:lb}
\frac{m \log \Delta}{k} \leq (1+o(1)) \log y.
\end{equation}

Also, by the assumption on the sizes of $\mathcal{P}_{j}'s$, we have
$$
\frac{y}{\log y} \ll |\mathcal{P}_0|\leq k^m |\mathcal{P}_m|\ll \frac{k^{m+1}y^{1-2\eps}\log N}{\log y},
$$
and thus
$$y^{2\eps}\ll k^{m+1}\log N.$$
Since $y\geq (\log N)^{1/\eps}$, it follows that $\log N \leq y^{\eps}$. Thus,
\begin{equation}\label{eq:ub}
\eps \log y \leq (1+o(1))(m+1)\log k \leq (2+o(1))m\log k.
\end{equation}

Comparing inequalities~\eqref{eq:lb} and~\eqref{eq:ub},  we obtain that
$$
\log \Delta \leq \frac{(2+o(1))k\log k}{\eps},
$$
contradicting the choice of $\Delta$ in equation~\eqref{eq:Delta}. This completes the proof of the claim.
\end{poc}

Let $\mathcal{P}$ be from Claim~\ref{lem:RL}. By the pigeonhole principle, for each $n\in A$, we can find some $1\leq i \leq k$ such that $n \in R_p^{(i)} \pmod p$ for at least $|\mathcal{P}|/k$ many primes $p \in \mathcal{P}$. It follows that
$$
A\subseteq \bigcup_{i=1}^k \bigcup_{\substack{\mathcal{P}' \subseteq \mathcal{P}\\ |\mathcal{P}'|\geq |\mathcal{P}|/k}} S^{(i)}(\mathcal{P}'):=\bigcup_{i=1}^k T_i.
$$    
Thus, it remains to show that $T_i$ is contained in an arithmetic progression of length $\ll y^{1/2-\eps}$ for each $1\leq i \leq k$.

Let $\Delta'=\operatorname{lcm} \{1,2, \ldots, \Delta\}$. By the prime number theorem, $\Delta'=\exp(O(\Delta))$.

Fix $1\leq i \leq k$. If $|S^{(i)}(\mathcal{P})|\leq 1$, then $S^{(i)}(\mathcal{P}')=S^{(i)}(\mathcal{P})$ for all $\mathcal{P}' \subseteq \mathcal{P}$ with $|\mathcal{P}'|\geq |\mathcal{P}|/k$, and it follows that $|T_i|\leq 1$ and we are done. Next assume that $|S^{(i)}(\mathcal{P})|\geq 2$. Let $d=D(S^{(i)}(\mathcal{P}))$ and pick $a \in S^{(i)}(\mathcal{P})$. For each $\mathcal{P}' \subseteq \mathcal{P}$ with $|\mathcal{P}'|\geq |\mathcal{P}|/k$, we have $|\mathcal{P}'|\geq Cy^{1-2\eps} \log N /\log y$, $a \in S^{(i)}(\mathcal{P}')$, $D(S^{(i)}(\mathcal{P}')) \mid d$ with $D(S^{(i)}(\mathcal{P}'))\geq d/\Delta$, and thus $$S^{(i)}(\mathcal{P}') \subseteq \{a+jD(S^{(i)}(\mathcal{P}')) : -(2y)^{1/2-\eps}\leq j\leq (2y)^{1/2-\eps}\}.
$$
Let $d'$ be the greatest common divisor of $D(S^{(i)}(\mathcal{P}'))$ among all subsets $\mathcal{P}' \subseteq \mathcal{P}$ with $|\mathcal{P}'|\geq |\mathcal{P}|/k$. Then $d' \mid d$ and $\frac{d}{d'}\leq \Delta'$. It follows that 
$$
T_i \subseteq \{a+jd' : -(2y)^{1/2-\eps}\Delta'\leq j\leq (2y)^{1/2-\eps}\Delta'\},
$$
that is, $T_i$ is contained in an arithmetic progression of length at most $$2 (2y)^{1/2-\eps} \cdot \exp(O(\Delta))+1=\exp(\exp(O(k\log k/\eps))) y^{1/2-\eps},$$ as required. 
\end{proof}

\begin{rem}\label{rem:kAP}
In the statement of Theorem~\ref{thm:kAP}, we required $n\in R_p \pmod p$ for each prime $p\in [y,2y]$. By slightly modifying the above proof, we can show that if $\delta \in (0,\eps)$ and $n\in R_p \pmod p$ holds for at least $y^{1-\eps+\delta}$ primes $p\in [y,2y]$, then $A$ is contained in the union of the $k$ arithmetic progressions, each of length $\ll_{k,\eps,\delta} y^{1/2-\eps}$. This observation will be needed in the proof of Theorem~\ref{thm:kAP_sieve}.
\end{rem}

The next remark shows that Theorem~\ref{maintheorem} cannot be extended to the setting where $R_p$ is a union of $k$ short arithmetic progressions for each prime $p\in [y,2y]$.

\begin{rem}\label{rem:kAPno}
Let $y=N/2$ and $k\geq 2$. Choose $k$ positive integers $L_1,\ldots,L_k$ such that $L_{i+1}\geq 4L_i$ for all $1\leq i \leq k-1$, $\gcd(L_i,L_j)=1$ for all $1\leq i<j\leq k$, and $L_k<y^{1/k-\eps}$. For any $1\leq i\leq k$ and all the primes $p\in [(1+\frac{i-1}{k})y,(1+\frac{i}{k})y],$ let 
\[R_p = \bigg\{L_i,2L_i,\ldots,\bigg(\prod_{r\neq i}L_r\bigg)L_i\bigg\} \cup\bigcup_{\substack{1\leq j\leq k}}\{0,L_j,2L_j\}\subseteq\Z_p.\] 
Let $A$ be the set of all integers $n\in [0,N]$ such that  $n$ mod $p$ is in $R_p$ for every prime $p \in [y,2y]$. It is easy to verify that $A = \cup_{j=1}^k\{0,L_j,2L_j\}\cup\{\prod_{i=1}^kL_i\}$. Consequently, $A$ cannot be written as a union of $k$ arithmetic progressions because $L_i$'s grow exponentially.
\end{rem}

\subsection{Improved larger sieve}

\begin{proof}[Proof of Theorem~\ref{thm:interval_sieve}]
    Let $y=(16\log N)^{1/\eps}$ and $\mathcal{P}$ be the set of all primes in $[y,2y]$. It follows from Theorem~\ref{thm:interval} that there is an interval $Q = \{a,a+1,\ldots,a+\ell\}$ with $\ell\leq (2y)^{1-\eps}$, such that $A\subseteq Q$ and $Q\pmod p\subseteq I_p$ for every $p\in [y,2y]$. 

    Next we replace $N$ by $\ell$, $A$ by $A-a$, $y$ by $(16\log \ell)^{1/\eps}$ and repeat the above argument. This procedure terminates when we have either $\ell\leq N_0(\eps)$ or $(16\log \ell)^{1/\eps}\leq p_0$. If $\ell\leq N_0(\eps)$, then $A$ is contained in an interval of length at most $N_0(\eps)$; if $(16\log \ell)^{1/\eps}\leq p_0$, then we replace $y$ by $p_0$, $A$ by $A-a$, and $N$ by $\lceil\exp(p_0^{\eps}/16)\rceil$, and then apply Theorem~\ref{thm:interval} once again. It follows that $A$ is contained in an interval of length at most $p_0^{1-\eps}+1$. In conclusion, $A$ must be contained in an interval of length at most $\max(p_0^{1-\eps}+1,N_0(\eps))$ as required.
\end{proof}

Next, we use a similar idea to prove Theorem~\ref{thm:kAP_sieve}.

\begin{proof}[Proof of Theorem~\ref{thm:kAP_sieve}]
    Let $y=(\log N)^{1/\eps}/2$. It follows from Theorem~\ref{thm:kAP} that there exists $k$ arithmetic progressions $Q_1,\ldots ,Q_k$ with $|Q_i|\ll_{k,\eps} y^{1/2-\eps}$ for all $1\leq i\leq k$, such that $A\subseteq\cup_{i=1}^k Q_i$. To prove the theorem, it suffices to show $|A \cap Q_i| \ll_{k,\eps}(\log N)^{1/2-\eps}+p_0^{1/2-\eps}$ for each $1\leq i \leq k$.
        
    Now fix some $1\leq i\leq k$. If $|Q_i|\leq 1$, we are done. Next assume that $|Q_i|\geq 2$, say $Q_i = \{a_i,a_i+d_i,\ldots,a_i+\ell_id_i\}$. Let
    \[A_i = \frac{1}{d_i}((A\cap Q_i)-a_i),\quad \mathcal{P}_i = \{p\in [C'\log N,2C'\log N]: p\nmid d_i\},\]
    where $C'$ is a constant to be determined later.
    Then $A_i\subseteq [0,\ell_i]$ and for each prime $p\in \mathcal{P}_i$, $A_i\pmod p$ is contained in the union of $k$ arithmetic progressions $\bigcup_{j=1}^k \overline{d_i}(R_p^{(j)}-a_i) \subset \Z_p$, where $\overline{d_i}$ is the multiplicative inverse of $d_i$ modulo $p$. Since $d_i\leq N$, it has at most $\log N/\log (C'\log N)$ distinct prime factors in the interval $[C'\log N,2C'\log N]$. By the prime number theorem, we have 
    \[|\mathcal{P}_i|\geq \frac{C'\log N}{2\log (C'\log N)}-\frac{\log N}{\log(C'\log N)}.\] 
    Note that $\log \ell_i\ll_{k,\eps}\log \log N$, thus we may choose $C'=C'(k,\eps)$ to be large enough so that $C'\log N\geq (\log \ell_i)^{1/\eps}$ and $|\mathcal{P}_i| \geq (C'\log N)^{1-\eps/2}$.
    
    If $p_0\leq C'\log N$, then it follows from Theorem~\ref{thm:kAP} that $A_i$ is contained in the union of $k$ arithmetic progressions, each of length $\ll_{k,\eps}(\log N)^{1/2-\eps}$; if $p_0>C'\log N$, then we instead replace $\mathcal{P}_i$ by $\mathcal{P}_i' = \{p\in [p_0,2p_0]:p\nmid d_i\}$ and then apply Theorem~\ref{thm:kAP} to deduce that $A_i$ is contained in the union of $k$ arithmetic progressions, each of length $\ll_{k,\eps}p_0^{1/2-\eps}$. Thus, we have $|A_i|\ll_{k,\eps} (\log N)^{1/2-\eps}+p_0^{1/2-\eps}$, as required.
\end{proof}

We end the section by illustrating the sharpness of these two theorems.

\begin{rem}\label{rem:sieve}
Take $A = \{0,1,2,\ldots, \lfloor p_0^{1-\eps}\rfloor\}$. Then obviously $A_p \subseteq \{0,1,2,\ldots, \lfloor p^{1-\eps}\rfloor\}$ for each $p\geq p_0$. Thus, Theorem~\ref{thm:interval_sieve} is sharp when $p_0^{1-\eps}+1>N_0(\eps)$.

Our bound on $|A|$ in Theorem~\ref{thm:kAP_sieve} is optimal up to the implied constant. When $p_0\geq \log N$, we can take $A = \{0,1,2,\ldots, \lfloor p_0^{1/2-\eps}\rfloor\}$ so that $A_p$ is contained in an arithmetic progression of length at most $p^{1/2-\eps}$ for each $p\geq p_0$. When $p_0$ is small (for example $p_0=1$), let 
\[A = \bigg\{j\prod_{p \leq 0.5\log N}p: 0\leq j\leq (0.5\log N)^{1/2-\eps}\bigg\}\subset [0,N].\]
Then $A_p = \{0\}$ for $p\leq 0.5\log N$ and $A_p$ is an arithmetic progression of length at most $p^{1/2-\eps}$ for $p>0.5\log N$. 
\end{rem}

\section{Inverse results concerning Generalized Arithmetic Progressions}\label{sec:GAP}
In this section, we prove Theorems~\ref{thm:energy},~\ref{thm:inverseGAP},~\ref{thm:inverseGAPsamerank}, and~\ref{thm:largeGAP}. 

\subsection{Proof of Theorems~\ref{thm:energy}}
Recall that $E(S)=\#\{(a,b,c,d) \in S^4:a+b=c+d\}$.
    Since $S\subseteq[N]$, it follows that
    \[\prod_{\substack{a,b,c,d\in S\\a+b\neq c+d}}|a+b-c-d|\leq (2N)^{|S|^4-E(S)}.\]
    Taking the logarithm on both sides, we obtain that
    \begin{equation}\label{eq:log}
    \sum_{\substack{a,b,c,d\in S\\a+b\neq c+d}}\log |a+b-c-d| \ll |S|^4\log N.
    \end{equation}

    Next, we lower bound the left-hand side of inequality~\eqref{eq:log} as follows:
    \[\sum_{\substack{a,b,c,d\in S\\a+b\neq c+d}}\log |a+b-c-d| \geq \sum_{p\in [y,2y]}\sum_{\substack{a,b,c,d\in S\\a+b\neq c+d}}1_{p\mid a+b-c-d}\log p \geq \sum_{p\in [y,2y]}(E_p(S)-E(S))\log p,\]
    where $E_p(S) = \#\{(a,b,c,d)\in S^4:a+b\equiv c+d\pmod p\}.$ Since $E_p(S) \geq E(S_p)\geq \delta |S_p|^3$ holds for each $p\in [y,2y]$, it follows from the prime number theorem that 
    \[\sum_{p\in [y,2y]}(E_p(S)-E(S))\log p\gg \delta\sum_{p\in [y,2y]}|S_p|^3\log p\ -E(S)y.\]
    Therefore, inequality~\eqref{eq:log} implies that
    \begin{equation}\label{eq:S^4}
    |S|^4\log N\gg \delta \sum_{p\in [y,2y]}|S_p|^3\log p\ -E(S)y.
    \end{equation}

 On the other hand, from Lemma~\ref{GS} and the prime number theorem, we have
    \begin{equation}\label{eq:Gsieve}
        |S|\ll \frac{\underset{p\in [y,2y]}\sum\log p - \log N}{\underset{p \in [y,2y]}\sum\frac{\log p}{|S_p|}-\log N} \ll_{\theta} \frac{y}{\underset{p \in [y,2y]}\sum\frac{\log p}{|S_p|}-\log N}.
    \end{equation}
    By H{\"o}lder's inequality and the prime number theorem,
    \[\bigg(\sum_{p\in[y,2y]}|S_p|^3\log p\bigg)\bigg(\underset{p \in [y,2y]}\sum\frac{\log p}{|S_p|}\bigg)^3 \geq \bigg(\sum_{p\in[y,2y]}\log p\bigg)^4 \gg y^4.\]
    Hence
    \begin{equation}\label{eq:S^3}
        \sum_{p\in[y,2y]}|S_p|^3\log p \gg y^4\bigg(\sum_{p\in[y,2y]}\frac{\log p}{|S_p|}\bigg)^{-3}.
    \end{equation}
    Since $y\geq (\log N)^{2/(1-\theta)}$ and $|S_p|\leq p^\theta$ for each prime $p\in [y,2y]$, it follows that 
    \[\sum_{p\in[y,2y]}\frac{\log p}{|S_p|}\geq \sum_{p\in [y,2y]}\frac{\log p}{p^\theta} \gg_{\theta} y^{1-\theta}\gg (\log N)^2.\]
    Thus inequalities \eqref{eq:Gsieve} and \eqref{eq:S^3} imply that
    \begin{align*}
    |S|^4\log N
    \ll_{\theta} \frac{y^4\log N}{(\sum_{p\in[y,2y]}\frac{\log p}{|S_p|})^{4}}\ll \frac{\log N \cdot \sum_{p\in[y,2y]}|S_p|^3\log p}{\sum_{p\in[y,2y]}\frac{\log p}{|S_p|}}
    \ll_{\theta} \frac{\sum_{p\in[y,2y]}|S_p|^3\log p}{\log N}.
    \end{align*}
    Now inequalities~\eqref{eq:S^4}, \eqref{eq:Gsieve}, and \eqref{eq:S^3} together imply that \[E(S)\gg_{\theta} \frac{\delta}{y} \sum_{p\in[y,2y]}|S_p|^3\log p\gg_\theta \delta\bigg(\frac{y}{\sum_{p\in[y,2y]}\frac{\log p}{|S_p|}}\bigg)^3\gg_\theta \delta|S|^3,\] as required. 

\subsection{Proof of Theorem~\ref{thm:inverseGAP}}
   
By translating $S$ and $R_p$ we may assume that $0\in S\subseteq[0,N-1]$ and $0\in R_p$ for all primes $p\in [y,2y]$. Since $N$ is large compared to $r$ and $\eps$, by the given lower bound on $y$, we can assume that $y$ is large enough. Since $y\geq (\log N)^{2/\eps}\geq (\log N)^{2/(1-\theta)}$, the prime number theorem and Lemma~\ref{GS} imply that
     \begin{equation}\label{eq:GSbound0}
     |S|\leq \frac{\sum_{p\in [y,2y]}\log p-\log N}{\sum_{p\in [y,2y]}\frac{\log p}{|S_p|}-\log N}\leq \frac{\sum_{p\in [y,2y]}\log p-\log N}{\sum_{p\in [y,2y]}\frac{\log p}{p^\theta}-\log N}\leq  \frac{2(1-\theta)}{2^{1-\theta}-1}y^\theta. 
     \end{equation}
Let $n$ be the minimal positive integer such that 
\begin{equation}\label{eq:choose_n}
2^n>2^{r+1}(3n+1)^r\delta^{-1}.
\end{equation}
Then we have $n \asymp r\log r+\log\delta^{-1}$. Let $h=3n+1$. By inequality~\eqref{eq:GSbound0}, we have 
\begin{equation}
    \frac{\delta y}{(2h)^r|S|}\geq \frac{(2^{1-\theta}-1)\delta y^{1-\theta}}{2(1-\theta)(6n+2)^r}\gg \frac{\delta y^{1-\theta}}{(r\log r+\log\delta^{-1})^r}\gg \frac{y^\eps}{(r\log r+\log y)^r}.
\end{equation}
Since $y\geq (\log N)^{2/\eps}$, when $N$ is sufficiently large depending on $r$ and $\eps$, we can guarantee that 
\begin{equation}\label{eq:Gsieve_hcond}
    \frac{\delta y}{(2h)^r|S|}\geq 8\log (2hN).
\end{equation}
For each prime $p\in [y,2y]$, we have $h(S\cup -S)_p\subseteq h(R_p\cup-R_p)\subseteq h(R_p-R_p)$; since $R_p$ is a GAP of rank at most $r$, it follows that 
    \[
    |h(R_p\cup -R_p)|\leq |h(R_p-R_p)|\leq (2h)^{r}|R_p|\leq (2h)^{r}\delta^{-1}|S_p|\leq (2h)^{r}\delta^{-1}|S|.
    \] 
Thus, by the prime number theorem and Lemma~\ref{GS}, we have
    \begin{align*}
        |h(S\cup -S)|
        &\leq \frac{\sum_{p\in [y,2y]}\log p-\log (2hN)}{\sum_{p\in [y,2y]}\frac{\log p}{|h(S\cup-S)_p|}-\log (2hN)} \notag\\
        &\leq \frac{\sum_{p\in [y,2y]}\log p-\log (2hN)}{\sum_{p\in [y,2y]}\frac{\log p}{(2h)^{r}\delta^{-1}|S|}-\log (2hN)}\leq 2\cdot(2h)^r\delta^{-1}|S|, 
    \end{align*}
where we used inequality~\eqref{eq:Gsieve_hcond} in the last step. From inequality~\eqref{eq:choose_n}, we know
\[|(3n+1)(S\cup -S)|\leq 2^{r+1}\cdot (3n+1)^r\delta^{-1}|S\cup -S|< 2^n|S\cup -S|.\]
It follows from Lemma~\ref{lem:polygrowth} that $S\cup -S$ has relative polynomial growth of order $O(n) = O(r\log r+\log\delta^{-1})$. Now we can apply Corollary~\ref{cor:polygrowth} to the set $S\cup -S$ to conclude that there is a GAP $Q$ of rank at most $O((r+\log\delta^{-1})^{1+o(1)})$, such that $S\subseteq(S\cup -S)-(S\cup -S)\subseteq Q$ and $$|Q|\leq \exp(O((r+\log\delta^{-1})^{1+o(1)}))|S\cup -S|\leq \exp(O((r+\log\delta^{-1})^{1+o(1)}))|S|.$$ 

\subsection{Proof of Theorems~\ref{thm:inverseGAPsamerank} and~\ref{thm:largeGAP}}

We conclude the paper with a proof of Theorems~\ref{thm:inverseGAPsamerank} and~\ref{thm:largeGAP}. 

\begin{proof}[Proof of Theorem~\ref{thm:inverseGAPsamerank}]
Since $N$ is large compared to $r$ and $\eps$, by the given lower bound on $y$, we can assume that $y$ is large enough. Since $y>(8^{r+3}\log N)^{1/(r\eps)}\geq (64\log N)^{1/(1-\theta)}$, the prime number theorem and Lemma~\ref{GS} imply that
     \begin{equation}\label{eq:upperboundS}
     |S|\leq \frac{\sum_{p\in \mathcal{P}}\log p-\log N}{\sum_{p\in \mathcal{P}}\frac{\log p}{|S_p|}-\log N}\leq \frac{\sum_{p\in \mathcal{P}}\log p-\log N}{\sum_{p\in \mathcal{P}}\frac{\log p}{p^\theta}-\log N}\leq 16 y^\theta. 
     \end{equation}
     
 Let $k$ be the unique positive integer such that 
    \begin{equation}\label{eq:2^k}
    (2\delta^{-1})^{\frac{1}{1-\eta}} < 2^k \leq 2(2\delta^{-1})^{\frac{1}{1-\eta}}.
    \end{equation}
    By inequalities~\eqref{eq:upperboundS} and~\eqref{eq:2^k}, and the given lower bound on $\delta$ and $y$, we have
    \begin{equation}\label{eq:Gsieve_cond}
    \frac{\delta y}{2^{kr}|S|}\geq \frac{\delta^{1+\frac{r}{1-\eta}}y}{2^{r+\frac{r}{1-\eta}}|S|}\geq \frac{\delta^{1+\frac{r}{1-\eta}}y^{1-\theta}}{2^{r+\frac{r}{1-\eta}+4}} \geq \frac{y^{r\eps/(1-\eta)}}{2^{3r+4}}\geq 32\log N.   
    \end{equation}
    By inequality~\eqref{eq:2^k} and the given lower bound on $\eta,\delta$, and $y$, we have
    \begin{equation}\label{eq:kbound}
    \log (2^k)< 1+\frac{\log (2\delta^{-1})}{1-\eta}<1+2\log (2y)< 3\log N.
    \end{equation}
    
    For each prime $p\in \mathcal{P}$, we have $(2^kS)_p\subseteq 2^kR_p$; since $R_p$ is a GAP of rank at most $r$, it follows that 
    \[
    |2^kR_p|\leq 2^{kr}|R_p|\leq 2^{kr}\delta^{-1}|S_p|\leq 2^{kr}\delta^{-1}|S|.
    \] 
Thus, by the prime number theorem and Lemma~\ref{GS}, we have
    \begin{align}
        |2^kS|
        &\leq \frac{\sum_{p\in \mathcal{P}}\log p-\log (2^k N)}{\sum_{p\in \mathcal{P}}\frac{\log p}{|(2^kS)_p|}-\log (2^k N)}\leq \frac{\sum_{p\in \mathcal{P}}\log p-\log (2^k N)}{\sum_{p\in \mathcal{P}}\frac{\log p}{2^{kr}\delta^{-1}|S|}-\log (2^k N)}\leq 2^{kr+1}\delta^{-1}|S|, \label{eq:Gsieve2j}
    \end{align}
    where we used $|\mathcal{P}|\geq y/(4\log y)$ and inequalities~\eqref{eq:Gsieve_cond} and~\eqref{eq:kbound} in the last step.  
    
   Now it follows from inequality~\eqref{eq:Gsieve2j} that
    \[
    \prod_{j=1}^k\frac{|2^jS|}{|2^{j-1}S|} = \frac{|2^kS|}{|S|} \leq 2^{kr+1}\delta^{-1}.
    \]
    By inequality~\eqref{eq:2^k} and the pigeonhole principle, there exists some integer $1\leq j_0\leq k$ such that 
    \[
    \frac{|2^{j_0}S|}{|2^{j_0-1}S|} \leq (2^{kr+1}\delta^{-1})^{1/k}<(2^{kr+1} \cdot 2^{k(1-\eta)}/2)^{1/k}=2^{r+1-\eta}.
    \]
    By Theorem~\ref{thm:FB}, there is an absolute constant $C>0$ so that we can cover $2^{j_0-1}S$ by at most 
    \[\exp(\exp(Cr))/(\eta/2)^{C2^r}\leq \exp(C\exp(Cr)\cdot\log\eta^{-1})\] translates of a GAP $Q$ with rank at most $r$ and 
    \[
    |Q|\leq |2^{j_0-1}S|\leq |2^kS|\leq 2^{kr+1}\delta^{-1}|S|\leq 2^{3r+1}\delta^{-(1+\frac{r}{1-\eta})}|S|
    \]
    by inequalities~\eqref{eq:Gsieve2j} and~\eqref{eq:2^k}. In particular, we can cover $S$ by at most $\exp(C\exp(Cr)\cdot\log \eta^{-1})$ translates of $Q$.
\end{proof}

Finally, we use Theorem~\ref{thm:inverseGAPsamerank} to deduce Theorem~\ref{thm:largeGAP}.

\begin{proof}[Proof of Theorem~\ref{thm:largeGAP}]
First, we choose $\eta \in (0,1/2)$ such that
$$
\frac{(\theta-1)(1-\eta)}{r+1-\eta}+\frac{2\eps}{3}=\frac{\theta-1}{r+1}+\eps.
$$
Note that
$$
\frac{\eps}{3}=\frac{(\theta-1)(1-\eta)}{r+1-\eta}-\frac{\theta-1}{r+1}=\frac{(1-\theta)\eta r}{(r+1)(r+1-\eta)} \asymp \frac{(1-\theta)\eta}{r},
$$
thus $\log \eta^{-1}\ll \log \eps^{-1}$.
We shall apply Theorem~\ref{thm:inverseGAPsamerank} with
$$
\delta=y^{-\frac{1-\theta}{r+1}+\eps}=y^{\frac{(\theta-1)(1-\eta)}{r+1-\eta}+\frac{2\eps}{3}}.
$$

Let $\mathcal{P}=\{p \in [y,2y]: |S_p|\geq \frac{\delta}{16} p^\theta\}$. We claim that $|\mathcal{P}|\geq y/(4\log y)$. Since $N$ is large enough, it follows from the lower bound on $y$ that $y$ is large enough. In particular, by the prime number theorem, 
\begin{equation}\label{eq:logN}
\underset{p \in [y,2y]}\sum\frac{\log p}{|S_p|}\geq \underset{p \in [y,2y]}\sum\frac{\log p}{p^\theta} \geq \frac{2^{1-\theta}-1}{2(1-\theta)} y^{1-\theta} \geq 2\log N.
\end{equation}
By Lemma~\ref{GS}, the prime number theorem, and inequality~\eqref{eq:logN}, we have
$$
\delta y^{\theta} \leq
|S|\leq \frac{\underset{p\in [y,2y]}\sum\log p - \log N}{\underset{p \in [y,2y]}\sum\frac{\log p}{|S_p|}-\log N} \leq \frac{4y}{\underset{p \in [y,2y]}\sum\frac{\log p}{|S_p|}},
$$
It follows that
\begin{equation}\label{eq:uba}
\underset{p \in [y,2y]}\sum\frac{\log p}{|S_p|} \leq 4 \delta^{-1} y^{1-\theta}.
\end{equation}
On the other hand, we have 
\begin{equation}\label{eq:lba}
\underset{p \in [y,2y]}\sum\frac{\log p}{|S_p|}
\geq \underset{p \in [y,2y]\setminus \mathcal{P}}\sum\frac{\log p}{|S_p|}\geq \underset{p \in [y,2y]\setminus \mathcal{P}}\sum\frac{8\log y}{\delta y^\theta}=8\frac{\#\{p: p\in [y,2y] \setminus \mathcal{P}\} \cdot \log y}{\delta y^\theta}.
\end{equation}
Since $y$ is large enough, we have $\#\{p: p\in [y,2y]\}\geq 3y/4\log y$. Thus, comparing inequalities~\eqref{eq:uba} and~\eqref{eq:lba}, it follows that $|\mathcal{P}|\geq y/(4\log y)$, as required.

It then follows from Theorem~\ref{thm:inverseGAPsamerank} that there exists a GAP $Q$ of rank at most $r$ and size at most $$
2^{3r+1}\delta^{-(1+\frac{r}{1-\eta})}|S|=2^{3r+1} y^{1-\theta-\frac{2\eps(1-\eta+r)}{3(1-\eta)}}|S|\leq 2^{3r+5} y,
$$
where we used inequality~\eqref{eq:upperboundS} in the least step; moreover, $S$ is covered by at most $\exp(C\exp(Cr)\cdot\log\eps^{-1})$ translates of $Q$, where $C$ is an absolute constant.
\end{proof}

\section*{Acknowledgments}
The third author was supported in part by an NSERC fellowship.

\end{document}